\documentclass[12pt]{amsart}
 \usepackage{amscd,amsmath,amsthm,amssymb}
 \usepackage{pstcol,pst-plot,pst-3d}%\usepackage[T1]{fontenc}
 \psset{unit=0.7cm,linewidth=0.8pt,arrowsize=2.5pt 4}
% for vertex a circle with radius 0.5 mm

% for fat lines
\newpsstyle{fatline}{linewidth=1.5pt}
\newpsstyle{fyp}{fillstyle=solid,fillcolor=verylight}
\definecolor{verylight}{gray}{0.97}
\definecolor{light}{gray}{0.93}
\definecolor{medium}{gray}{0.82}
 %
 %------    GENERAL MACROS    -----
 %
 % Standard rings and fields, affine and projective space
 %
                % the font for N,Z,Q,R,C

 %
 %------------------------------------------------
 % Symbols in "Fraktur"
 %
 \def\frk{\frak}               % font for "Fraktur"

 \def\mm{{\frk m}}
 
 \def\nn{{\frk n}}
 %\def\Phi{{\frk N}}
 %
 %------------------------------------------------
 \def\MI{{\mathcal I}}

 \def\P{{\mathcal P}}

 % Small letters in bold
 %

 %
 \def\opn#1#2{\def#1{\operatorname{#2}}} % to make operators
 %------------------------------------------------
 % Numerical invariants of rings, ideals, and modules
 %
 \opn\chara{char} \opn\length{\ell} \opn\pd{pd} \opn\rk{rk}
 \opn\projdim{proj\,dim} \opn\injdim{inj\,dim} \opn\rank{rank}
 \opn\depth{depth} \opn\grade{grade} \opn\height{height}
 \opn\embdim{emb\,dim} \opn\codim{codim}
 
 \opn\Tr{Tr} \opn\bigrank{big\,rank}
 \opn\superheight{superheight}\opn\lcm{lcm}
 \opn\trdeg{tr\,deg}%\emph{
 \opn\reg{reg} \opn\lreg{lreg} \opn\ini{in} \opn\lpd{lpd}
 \opn\size{size} \opn\sdepth{sdepth}
 \opn\link{link}\opn\fdepth{fdepth}\opn\lex{lex}
 %------------------------------------------------
 % Divisors
 %
 \opn\div{div} \opn\Div{Div} \opn\cl{cl} \opn\Cl{Cl}
 %
 %------------------------------------------------
 % Subsets of the spectrum of a ring
 %
 \opn\Spec{Spec} \opn\Supp{Supp} \opn\supp{supp} \opn\Sing{Sing}
 \opn\Ass{Ass} \opn\Min{Min}\opn\Mon{Mon}
 %
 %------------------------------------------------
 % Standard operations on ideals and modules
 %
 \opn\Ann{Ann} \opn\Rad{Rad} \opn\Soc{Soc}
 %
 %------------------------------------------------
 % Linear algebra and homology, endo- and automorphisms
 %
 \opn\Im{Im} \opn\Ker{Ker} \opn\Coker{Coker} \opn\Am{Am}
 \opn\Hom{Hom} \opn\Tor{Tor} \opn\Ext{Ext} \opn\End{End}
 \opn\Aut{Aut} \opn\id{id}
 
 \opn\nat{nat}
 \opn\pff{pf}%   \pf exists already
 \opn\Pf{Pf} \opn\GL{GL} \opn\SL{SL} \opn\mod{mod} \opn\ord{ord}
 \opn\Gin{Gin} \opn\Hilb{Hilb}\opn\sort{sort}
 %
 %------------------------------------------------
 % Convexity
 %
 \opn\aff{aff} \opn
 \con{conv} \opn\relint{relint} \opn\st{st}
 \opn\lk{lk} \opn\cn{cn} \opn\core{core} \opn\vol{vol}
 \opn\link{link} \opn\star{star}\opn\lex{lex}\opn\set{set}
 %------------------------------------------------
 % Graded rings and Rees algebras
 \opn\gr{gr}
 
 %
 %------------------------------------------------
 % Polynomials and power series
 %
 
 \def\pot#1#2{#1[\kern-0.28ex[#2]\kern-0.28ex]}

 %
 %------------------------------------------------
 % Direct and inverse limits
 %
 \opn\dirlim{\underrightarrow{\lim}}
 \opn\inivlim{\underleftarrow{\lim}}
 %
 %
 % Names with a meaning
 %
 \let\union=\cup
 \let\sect=\cap

 \let\Union=\bigcup
 \let\Sect=\bigcap
 \let\Dirsum=\bigoplus
 
 %
 %------------------------------------------------
 %
 
 \let\To=\longrightarrow
 \def\Implies{\ifmmode\Longrightarrow \else
         \unskip${}\Longrightarrow{}$\ignorespaces\fi}
 \def\implies{\ifmmode\Rightarrow \else
         \unskip${}\Rightarrow{}$\ignorespaces\fi}
 \def\iff{\ifmmode\Longleftrightarrow \else
         \unskip${}\Longleftrightarrow{}$\ignorespaces\fi}

 \let\:=\colon
 \newtheorem{Theorem}{Theorem}[section]
 \newtheorem{Lemma}[Theorem]{Lemma}
 \newtheorem{Corollary}[Theorem]{Corollary}
 \newtheorem{Proposition}[Theorem]{Proposition}

 \newtheorem{Example}[Theorem]{Example}
 
 \newtheorem{Definition}[Theorem]{Definition}
 
 \newtheorem{Conjecture}[Theorem]{Conjecture}
 
 %
 % We like the var forms of some greek letters (as taught in German schools)
 %
 \let\epsilon\varepsilon
 \let\kappa=\varkappa
 %
 %           We print on A4 paper
 %
 \textwidth=15cm \textheight=22cm \topmargin=0.5cm
 \oddsidemargin=0.5cm \evensidemargin=0.5cm \pagestyle{plain}
 %
 %           The pf environment of AMSART needs a little help
 %
 \def\qed{\ifhmode\textqed\fi
       \ifmmode\ifinner\quad\qedsymbol\else\dispqed\fi\fi}
 \def\textqed{\unskip\nobreak\penalty50
        \hskip2em\hbox{}\nobreak\hfil\qedsymbol
        \parfillskip=0pt \finalhyphendemerits=0}
 \def\dispqed{\rlap{\qquad\qedsymbol}}
 
 %
 % ------    END OF GENERAL MACROS    -------
 \opn\dis{dis}
 \def\pnt{{\raise0.5mm\hbox{\large\bf.}}}
 
 \opn\Lex{Lex}

 %-- macro for local cohomology-----------------------------
 
 %-- macro for a complicated condition for the extended
 %-- Hochster's formula

 \begin{document}
  %\linenumbers

 \title {Monomial localizations and polymatroidal ideals}

 \author {Somayeh Bandari and J\"urgen Herzog}

\address{Somayeh Bandari, Department of Mathematics, Az-Zahra
University, Vanak, Post Code 19834, Tehran, Iran}
\email{somayeh.bandari@yahoo.com}

\address{J\"urgen Herzog, Fachbereich Mathematik, Universit\"at Duisburg-Essen, Campus Essen, 45117
Essen, Germany}
\email{juergen.herzog@uni-essen.de}

\subjclass{ 13C13,   05E40}
\keywords{Matroidal ideals, polymatroidal ideals, componentwise polymatroidal ideals, monomial localizations}

 \begin{abstract}
In this paper we consider monomial localizations of monomial ideals and conjecture that a monomial ideal is polymatroidal if and only if all its monomial localizations have a linear resolution. The conjecture is proved for squarefree monomial ideals where it is equivalent to a well-known characterization of matroids. We prove our conjecture in many other special cases. We also introduce the concept of componentwise polymatroidal ideals and extend several of the results, known for polymatroidal ideals,  to this new  class of ideals.
 \end{abstract}

 \maketitle

 \section*{Introduction}
The class of polymatroidal ideals is one of the rare classes of monomial ideals with the property that all powers of an ideal in this class have a linear resolution. This is due to the fact that the powers of a polymatroidal ideal are again polymatroidal \cite[Theorem 5.3]{CH} and that polymatroidal ideals have linear quotients \cite[Lemma 1.3]{HT} which implies that they have linear resolutions. Recall that a monomial ideal is called polymatroidal, if its monomial generators correspond to the bases of a discrete polymatroid, see \cite{HH}. Since the set of bases of a discrete polymatroid is characterized by the so-called exchange property, it follows that a polymatroidal ideal may as well be characterized as follows: let $I\subset S=K[x_1,\ldots,x_n]$ be a monomial ideal generated in a single degree. We denote, as usual by $G(I)$ the unique minimal set of monomial generators of $I$. Then $I$ is said to be polymatroidal, if for any two elements $u,v\in G(I)$ such that $\deg_{x_i}(u)> \deg_{x_i}(v)$ there exists an index $j$  with $\deg_{x_j}(u)< \deg_{x_j}(v)$ such that $x_j(u/x_i)\in I$.

Recently it has been observed that a monomial localization of a polymatroidal is again polymatroidal \cite[Corollary 3.2]{HRV}.
The monomial localization  of a monomial ideal $I$ with respect to a monomial prime ideal $P$ is the monomial ideal $I(P)$ which is obtained from $I$ by substituting the variables $x_i\not \in P$ by $1$. Observe that $I(P)$ is the unique monomial ideal with the property that $I(P)S_P=IS_P$. The monomial localization $I(P)$ can also be described as the saturation
$I\: (\prod_{x_i\not\in P}x_i)^\infty$.  Thus in the case that the polymatroidal ideal $I$  is squarefree, in which case it is called matroidal, we see  that $I(P)=I\:u$ where $u=\prod_{x_i\not\in P}x_i$.

By what we have explained so far it follows that all monomial localizations of polymatroidal ideals have a linear resolution. The natural question arises whether this property characterizes  polymatroidal ideals. The main purpose of this paper is to discuss this question. In Theorem~\ref{easier} we give an affirmative answer requiring however more that just the condition that all monomial localizations have a linear resolution. To be precise we show, that a monomial ideal $I$ is polymatroidal if and only $I\: u$ has a linear resolution for all monomials $u$ in $S$. In fact, among a few other equivalent conditions, we also  show that $I$ is polymatroidal if we only require that $I\: u$ is generated in a single degree for all monomials $u\in S$.   Since for a squarefree  monomial ideal $I$,  the colon ideal $I\: u$  is  a monomial localization for any monomial $u$, it follows (see Corollary~\ref{newyear}) that a squarefree monomial ideal $I$ is matroidal if and only if $I(P)$ is generated in a single degree for all monomial prime ideals $P$. It turns out that this characterization of matroidal ideals corresponds to a well-known characterization of matroids which  says that a simplicial complex is a matroid if and only if all its induced subcomplexes are pure, see  \cite[Proposition 3.1]{S}.

Even though  matroidal ideals are characterized by the property that all its monomial localizations have a linear resolution, we don't know whether the corresponding statement is true for polymatroidal ideals. There are simple examples of monomial ideals which show that all monomial localizations are generated in a single degree but the ideals themselves  are not polymatroidal. However due to computational evidence we are lead to conjecture that the  monomial ideals with the property that all monomial localizations have a linear resolution are precisely the polymatroidal ideals. In Section~\ref{polymatroidalsection} we discuss several special cases which support this conjecture. In fact we give an affirmative answer to the conjecture in the following cases: 1.\ $I$  is generated in degree 2 (Proposition~\ref{somayehbelievesit}), 2.\ $I$ contains at least $n-1$ pure powers (Proposition~\ref{purepowers}), 3.\ $I$ is monomial ideal in at most 3 variables (Corollary~\ref{dimtwo} and Proposition~\ref{dimthree}), 4.\  $I$ has no embedded prime ideal and either $|\Ass(S/I)|\leq 3$  or $\height(I)=n-1$ (Proposition~\ref{anothercase}).

We would like to point out that in each of the special cases mentioned above we use completely   different arguments for  the proof of our conjecture.  For the moment we do not have a general strategy to prove it.

In Section~\ref{extension} we introduce componentwise polymatroidal ideals, namely those monomial ideals with the property that each of its components is generated by  a polymatroidal ideal. In contrast to polymatroidal ideals, powers of componentwise polymatroidal ideals need not to be componentwise polymatroidal, unless the  ideal is generated in at most two degrees, see Proposition \ref{veryspecial}. On the other, it might be that powers of componentwise linear ideals are componentwise linear. For this we could not find a counter example.

One would expect that an exchange property of its generators characterizes componentwise polymatroidal ideals.  For that purpose we introduce the so-called non-pure exchange property and show in Proposition \ref{exchange} that componentwise polymatroidal ideals enjoy the non-pure exchange property. On the other hand, we show by an example that an ideal with the non-pure exchange property need not to be componentwise polymatroidal.

It is natural to ask whether componentwise polymatroidal ideals have linear quotients. We expect that this is the case and prove it for  ideals which are componentwise of Veronese type. It is also an open question whether ideals satisfying the non-pure exchange property have linear quotients, even they are not componentwise polymatroidal.

 \section{An algebraic characterization of polymatroidal ideals and monomial localizations of  matroidal ideals}
\label{algebraicsection}
 Let $K$ be a field, $S=K[x_1,\ldots,x_n]$ the polynomial ring in the indeterminates  $x_1,\ldots, x_n$ and $I\subset S$ a monomial ideal. We first show

\begin{Theorem}
\label{easier}
Let $I$ be a monomial ideal. The following conditions are equivalent:
\begin{enumerate}
\item[(a)] $I$ is polymatroidal.
\item[(b)] $I\:u$ is polymatroidal  for all monomials $u$.
\item[(c)] $I\:u$  is generated in a single degree for all monomials $u$ and has linear quotients with respect to the reverse lexicographic order of the generators.
\item[(d)] $I\:u$ has a linear resolution for all monomials $u$.
\item[(e)] $I\:u$ is   generated in a single degree for all monomials $u$.
\end{enumerate}
\end{Theorem}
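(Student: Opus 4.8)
The plan is to prove the cyclic chain of implications (a) $\Rightarrow$ (b) $\Rightarrow$ (c) $\Rightarrow$ (d) $\Rightarrow$ (e) $\Rightarrow$ (a), so that each step is a single, reasonably self-contained argument. The implications (c) $\Rightarrow$ (d) and (d) $\Rightarrow$ (e) are immediate from standard facts: linear quotients (with generators in one degree) imply a linear resolution, and a linear resolution forces generation in a single degree. So the real content lies in (a) $\Rightarrow$ (b), (b) $\Rightarrow$ (c), and above all (e) $\Rightarrow$ (a).

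For (a) $\Rightarrow$ (b): given that $I$ is polymatroidal and $u$ a monomial, I would first reduce to the case $u = x_k^t$ a pure power, since $I\colon u$ for a general monomial is obtained by iterating colons with pure powers. Then I would verify the exchange property for $G(I\colon x_k^t)$ directly. Writing a generator of $I\colon x_k^t$ as $w/x_k^{\min(t,\,\deg_{x_k} w)}$ for $w \in G(I)$ suitably chosen, one takes two such generators $u', v'$ with $\deg_{x_i}(u') > \deg_{x_i}(v')$ and traces this back to the corresponding inequality (or a related one, after adjusting the $x_k$-exponents) among generators of $I$; applying the exchange property of $I$ and pushing the resulting membership back down through the colon gives the desired $x_j(u'/x_i) \in I\colon x_k^t$. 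One has to be a little careful when $i = k$ or $j = k$, but these cases are handled by bookkeeping on exponents. (Alternatively, one may simply invoke that monomial localizations of polymatroidal ideals are polymatroidal \cite[Corollary 3.2]{HRV} together with the description $I(P) = I\colon(\prod_{x_i\notin P}x_i)^\infty$, but a direct exchange-property argument keeps the section self-contained.)

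For (b) $\Rightarrow$ (c): this is the known fact that a polymatroidal ideal has linear quotients with respect to the reverse lexicographic order of its generators \cite[Lemma 1.3]{HT}, applied to every $I\colon u$. The genuinely new implication is (e) $\Rightarrow$ (a), and I expect this to be the main obstacle. Here one assumes only that $I\colon u$ is generated in a single degree for every monomial $u$, and must recover the exchange property of $I$. I would proceed by contradiction: suppose $u, v \in G(I)$ with $\deg_{x_i}(u) > \deg_{x_i}(v)$ but $x_j(u/x_i) \notin I$ for every $j$ with $\deg_{x_j}(u) < \deg_{x_j}(v)$. The idea is to build a monomial $w$ so that, in $I\colon w$, the (images of the) generators $u$ and $v$ survive in two different degrees — thereby contradicting (e). The natural candidate is to choose $w$ to strip $u$ down as far as possible toward $u/x_i$ while keeping $v$'s relevant structure: roughly, set $w = \prod_{\ell} x_\ell^{c_\ell}$ where $c_\ell$ is chosen to make $u/\gcd(u,w)$ of small degree and to block all generators that could interfere, using the failure of the exchange relation to guarantee that no generator of $I$ divides $x_j(u/x_i)w/\!\gcd$ for the obstructing indices $j$. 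Making this choice precise — and checking that $I\colon w$ genuinely has generators in two distinct degrees — is the crux; the key point to nail down is that the hypothesized failure of exchange is exactly what prevents the would-be lower-degree generator $u/x_i$ (suitably localized) from being absorbed into $I$, so that both a degree-$(\deg u - 1)$ element and a degree-$(\deg u)$ element appear in a minimal generating set of some $I\colon w$.

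Finally, the step I would flag as most delicate is pinning down the monomial $w$ in (e) $\Rightarrow$ (a): one must simultaneously ensure (i) the colon survives with the right low-degree generator, (ii) some generator of the original degree is still minimal in the colon, and (iii) no accidental cancellation or absorption occurs from the other generators of $I$. I would expect to need a careful case analysis according to whether the obstruction can be witnessed after localizing at a single variable or only after a more elaborate colon, and it is conceivable that the cleanest route is to first establish the equivalence with "$I\colon x_k$ behaves well for all $k$" and then induct on $\deg u + (\text{number of variables dividing }u)$.
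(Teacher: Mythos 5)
Your overall plan (prove the cycle (a)$\Rightarrow$(b)$\Rightarrow$(c)$\Rightarrow$(d)$\Rightarrow$(e)$\Rightarrow$(a)) matches the paper, and (b)$\Rightarrow$(c)$\Rightarrow$(d)$\Rightarrow$(e) are dispatched the same way. The real comparison is at the two ends, and there you have one implication roughly right and one with a genuine gap.

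On (a)$\Rightarrow$(b), the paper does reduce to a single variable $x_i$ as you suggest, but the argument is more delicate than ``trace the exchange property through the colon.'' Writing $I=\sum_{j\ge 0} I_j x_i^j$ with $x_i\nmid u$ for $u\in G(I_j)$, the paper first proves the structural fact $I:x_i=\sum_{j\ge 1}I_j x_i^{j-1}$ (i.e.\ $I_0$ is absorbed into $J:=\sum_{j\ge1}I_j x_i^{j-1}$ whenever $J\ne 0$), and for this it needs the \emph{symmetric} exchange property of polymatroidal ideals, not just the one-sided exchange axiom. Only after identifying $I:x_i$ with $J$ does it verify exchange for $J$, splitting into the cases $\deg_{x_i}(u)=\deg_{x_i}(v)$ and $\deg_{x_i}(u)>\deg_{x_i}(v)$. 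Your sketch would need this preliminary absorption step; without it, ``writing a generator of $I:x_i^t$ as $w/x_i^{\min(t,\deg_{x_i}w)}$'' does not by itself give you control over $G(I:x_i^t)$.

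On (e)$\Rightarrow$(a), which you correctly identify as the crux, there is a gap, and it is worth being precise about where. You set up a proof by contradiction and propose to build a monomial $w$ so that $I:w$ is minimally generated in two different degrees, with a rather elaborate recipe for choosing $w$ (``strip $u$ down toward $u/x_i$ while blocking interfering generators''). This is both more complicated than needed and not actually carried out. The paper's argument is direct and uses exactly the colon you should use, namely $w=v/x_i$ (in your notation, $u/x_i$), with no further adjustment. The observation that makes everything collapse is: since $v=(v/x_i)\cdot x_i\in I$, the variable $x_i$ lies in $I:(v/x_i)$; by hypothesis (e) this ideal is generated in a single degree, hence it is generated in degree $1$, i.e.\ by variables. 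Then for the other generator $w\in G(I)$, the monomial $w/\gcd(w,v/x_i)$ lies in $I:(v/x_i)$ and so is divisible by some variable $x_j\in G(I:(v/x_i))$. One checks $j\ne i$ (because $\deg_{x_i}(v)>\deg_{x_i}(w)$), so $\deg_{x_j}(w)>\deg_{x_j}(v)$; and, using (e) once more with $u=1$ to know that $I$ itself is generated in one degree, a short degree count shows the witnessing $z\in G(I)$ with $x_j=z/\gcd(z,v/x_i)$ must equal $(v/x_i)x_j$. No contradiction argument, no case analysis on how the obstruction is ``witnessed,'' and no auxiliary blocking monomial are needed; the single observation that a monomial ideal generated in one degree containing a variable is generated by variables does all the work. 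Your instinct to colon by something close to $u/x_i$ was right, but the mechanism you were reaching for (exhibiting two degrees) points the argument in the wrong direction.
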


\begin{proof}
(a)\implies (b): It is enough to show that for variable $x_i$, $I\:x_i$ is polymatroidal. Let $I=\sum_{j=0}^dI_jx_i^j$,
where for all $u\in G(I_j)$, $x_i\nmid u$. Then $I\:x_i=I_0+\sum_{j=1}^dI_jx_i^{j-1}$. Set $J=\sum_{j=1}^dI_jx_i^{j-1}$. Then
$I=I_0+x_iJ$, and $I\:x_i=I_0+J$. If $J=0$, then $I\:x_i=I_0=I$, and there is nothing to prove.

Now let $J\neq 0$.
We want to show that $I_0\subseteq J$. Let  $u$ be monomials with  $u\in I_0$. Since $J\neq0$ there exists a monomial $v\in I$ such that  $v\in x_iJ$.  Since $I$ is polymatroidal it satisfies the symmetric exchange property, see \cite[Theorem 12.4.1]{HH}.  Therefore, since  $x_i$ does not divide $u$ but does divide $v$, it follows that  there exists a variable $x_t$ with $t\neq i$  such that
$ux_i/x_t\in I$. Hence $ux_i/x_t\in x_iJ$, so $u/x_t\in J$. This  implies that $u\in J$. Thus we conclude that $I\:x_i=J$.

Let $u,v\in G(J)$ . So $x_iu,x_iv\in x_iJ\subseteq I$. If $\deg_{x_i}(u)=\deg_{x_i}(v)$,  since $I$ is polymatroidal,  it follows that $x_iu,x_iv$ satisfies  exchange property. Hence exchange property is satisfied for $u$ and $v$.

Let $\deg_{x_i}(u)>\deg_{x_i}(v)$, so $x_i|u$. Now for variable $x_l$ with $\deg_{x_l}(u)>\deg_{x_l}(v)$, we want to show that there exists variable $x_j$ such that $\deg_{x_j}(v)>\deg_{x_j}(u)$
and $(u/x_l)x_j\in G(J)$. Since $\deg_{x_l}(x_iu)>\deg_{x_l}(x_iv)$ and $I$ is polymatroidal, it follows that there exists variable $x_j$ such that $\deg_{x_j}(x_iv)>\deg_{x_j}(x_iu)$
and $(x_iu/x_l)x_j\in G(I)$. Since $x_i|u$, we have that $(x_iu/x_l)x_j\in I_tx_i^t$  for $t\geq 1$. Hence $(u/x_l)x_j\in I_tx_i^{t-1}\subseteq J$. Also we have that $\deg_{x_j}(v)>\deg_{x_j}(u)$.

(b)\implies (c): Any polymatroidal ideal is generated in a single degree   and has linear quotients with respect to the reverse lexicographic order of the generators, as shown in \cite[Lemma 1.3]{HT}. Therefore (b) implies (c) trivially.

(c)\implies (d) follows from the general fact that ideals generated in a single with  linear quotients have a linear resolution (see \cite[Lemma 4.1]{CH}), and (d)\implies (e) is trivial.

(e)\implies (a): Let $v,w\in G(I)$ with $\deg_{x_i}(v)>\deg_{x_i}(w)$. We want to show that there exists variable $x_j$
such that $\deg_{x_j}(w)>\deg_{x_j}(v)$ and $(v/x_i)x_j\in G(I)$. By assumption $I\:\frac{v}{x_i}$ is generated in a single degree. Hence, since $x_i\in G(I\:v/x_i)$ it follows that
$I\:v/x_i$ is generated in degree $1$. Hence, since $w/\gcd(w,v/x_i)\in  I\:v/x_i$, there exists $z\in G(I)$ such that $x_j=z/\gcd(z,v/x_i)$ for some $j$ and such that $x_j$ divides $w/\gcd(w,v/x_i)$. Then $\deg_{x_j}(w)>\deg_{x_j}(v/x_i)$. So since $\deg_{x_i}(v)>\deg_{x_i}(w)$ it follows  that $x_j\neq x_i$. Hence $\deg_{x_j}(w)>\deg_{x_j}(v/x_i)=\deg_{x_j}(v)$. Our  assumption (for $u=1$) implies that  $I$ is generated in a single degree.  Hence $\deg(z)=\deg(v)$. On the other hand,  it follows from   $x_j=z/\gcd(z,v/x_i)$ that $\deg_{x_l}(z) \leq \deg_{x_l}(v/x_i)=\deg_{x_l}(v)$ for all $l\neq i,j$ and $\deg_{x_j}(z)=\deg_{x_j}(v/x_i)+1=\deg_{x_j}(v)+1$ and also $\deg_{x_i}(z) \leq \deg_{x_i}(v/x_i)=\deg_{x_i}(v)-1$.  Therefore, $z=(v/x_i)x_j$.
\end{proof}

 We denote the set of monomial prime ideals of $S=K[x_1,\ldots,x_n]$ by $\P(S)$. Let $P\in \P(S)$ be a monomial prime ideal. Then $P=P_C$ for some subset $C\subset [n]$, where $P_C=(\{x_i\:\; i\not\in C\})$ and  $IS_P=JS_P$ where $J$ is the monomial ideal  obtained from $I$ by the substitution $x_i\mapsto 1$ for all $i\in C$. We call $J$ the monomial localization of $I$ with respect to $P$ and denote it by $I(P)$.

 For example, if $I=(x_1x_2x_3, x_2x_3x_4, x_3x_5x_6)\subset K[x_1,\ldots,x_6]$ and $C=\{4\}$, then $I(P_C)=(x_2x_3, x_3x_5x_6)$.

 Let $C\subset [n]$ and set $x_C=\prod_{i\in C}x_i$. Then $I(P_C)=I\: x_C^\infty =I\: x_C^k$ for $k$ large enough. In particular, if $I$ is a squarefree monomial ideal we have  that $I(P_C)=I\: x_C$. Therefore we obtain

 \begin{Corollary}
 \label{newyear}
 Let $I$ be a squarefree monomial ideal. The following conditions are equivalent:
 \begin{enumerate}
 \item[(a)] The ideal $I$ is a matroidal.
 \item[(b)] For all $P\in \P(S)$ the ideal  $I(P)$ is matroidal.
 \item[(c)] For all $P\in \P(S)$ the ideal  $I(P)$  is generated in a single degree and has linear quotients with respect to the reverse lexicographic order of the generators.
 \item[(d)] For all $P\in \P(S)$ the ideal $I(P)$ has  a linear resolution.
 \item[(e)] For all $P\in \P(S)$ the ideal $I(P)$  is   generated in a single degree.
 \end{enumerate}
 \end{Corollary}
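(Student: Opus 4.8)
The plan is to reduce the Corollary entirely to Theorem~\ref{easier}. The key point is that, when $I$ is squarefree, the family of colon ideals $\{I\:u \;:\; u \text{ a monomial in } S\}$ coincides with the family of monomial localizations $\{I(P) \;:\; P\in\P(S)\}$, and moreover every member of this family is itself squarefree. Granting this, and using that for a squarefree ideal the words ``polymatroidal'' and ``matroidal'' are interchangeable, conditions (a)--(e) of the Corollary become, word for word, conditions (a)--(e) of Theorem~\ref{easier}, so all five are equivalent.

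To prove that the two families coincide I would argue in two directions. In one direction, the remark preceding the Corollary already records $I(P_C)=I\:x_C$ for every $C\subset[n]$, so every monomial localization is a colon ideal of the required form. In the other direction, let $u$ be an arbitrary monomial and put $D=\supp(u)$. For any squarefree monomial $v$ the exponent of $\gcd(v,u)$ in a variable $x_i$ is $1$ if $x_i$ divides both $v$ and $u$ and $0$ otherwise, hence depends on $u$ only through $D$; thus $\gcd(v,u)=\gcd(v,x_D)$. Applying this to each $v\in G(I)$ and using $I\:u=\bigl(v/\gcd(v,u) : v\in G(I)\bigr)$ gives $I\:u=I\:x_D=I(P_D)$. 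Hence $\{I\:u\}=\{I(P)\}$.

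It remains to observe that $I\:u$ is squarefree whenever $I$ is: each of its generators $v/\gcd(v,u)$ divides the squarefree monomial $v$. Consequently $I\:u$ is polymatroidal if and only if it is matroidal, and the same holds for $I$ itself (taking $u=1$). Substituting these identifications into the five conditions of Theorem~\ref{easier} yields precisely (a)--(e) of the Corollary, and the proof is complete.

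There is essentially no serious obstacle here; the argument is a bookkeeping translation of Theorem~\ref{easier}. The one spot that genuinely uses squarefreeness --- and that should be stated with a little care --- is the identity $\gcd(v,u)=\gcd(v,x_D)$ with $D=\supp(u)$, valid for squarefree $v$, which is what allows an arbitrary colon ideal $I\:u$ to be realized as a monomial localization. Without it the correspondence between colon ideals and monomial localizations fails, which is exactly why the analogue for arbitrary monomial ideals remains only a conjecture, and why condition (e) alone no longer forces $I$ to be polymatroidal in the non-squarefree case.
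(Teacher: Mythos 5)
Your proof is correct and takes the same route the paper does: the paper records $I(P_C)=I:x_C$ for squarefree $I$ and then invokes Theorem~\ref{easier} (``Therefore we obtain''), and your explicit check that every colon ideal $I:u$ equals $I:x_{\supp(u)}$ and hence is a monomial localization, together with the remark that $I:u$ stays squarefree so that ``polymatroidal'' and ``matroidal'' agree, simply fills in the two small steps the paper leaves implicit. Both steps are verified correctly, so the reduction to Theorem~\ref{easier} is clean and complete.
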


 \begin{Corollary}
 \label{easyproof}
 Let $I$ be a squarefree monomial ideal. The following conditions are equivalent:
 \begin{enumerate}
 \item[(a)] The ideal $I$ is a matroidal.
 \item[(b)] For all $P\in \P(S)$ and all integers $k>0$ the ideal  $I^k(P)$ has a linear resolution.
 \item[(c)] For all $P\in \P(S)$ there exists an integer $k>0$ such that the ideal  $I^k(P)$ has a linear resolution.
 \item[(d)] For all $P\in \P(S)$ there exists an integer $k>0$ such that the ideal  $I^k(P)$ is  generated in a single degree.
 \item[(e)] For all $P\in \P(S)$ and all integers $k>0$ the ideal $I^k(P)$  is generated in a single degree.
 \end{enumerate}
 \end{Corollary}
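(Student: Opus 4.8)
The plan is to prove the cycle $(a)\Rightarrow(b)\Rightarrow(c)\Rightarrow(d)\Rightarrow(a)$ together with $(a)\Rightarrow(e)$ and $(e)\Rightarrow(d)$, which gives the equivalence of all five conditions. The implications $(b)\Rightarrow(c)$, $(c)\Rightarrow(d)$ and $(e)\Rightarrow(d)$ are immediate: a linear resolution forces generation in a single degree, and a property that holds for all $k>0$ holds in particular for some $k>0$. I would also record once and for all the identity $I^k(P)=I(P)^k$ (for $P=P_C$): the substitution $x_i\mapsto 1$ ($i\in C$) is multiplicative, so applied to the generating set $\{u_1\cdots u_k: u_i\in G(I)\}$ of $I^k$ it yields a generating set of $I(P)^k$, whence $I^k(P)=I(P)^k$.

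For $(a)\Rightarrow(b)$ and $(a)\Rightarrow(e)$ I would invoke the results quoted in the introduction. If $I$ is matroidal, then $I$ is polymatroidal, hence $I^k$ is polymatroidal for every $k$ by \cite[Theorem 5.3]{CH}, and every monomial localization of $I^k$ is polymatroidal by \cite[Corollary 3.2]{HRV}. In particular $I^k(P)=(I^k)(P)$ is polymatroidal, so it is generated in a single degree (this gives $(e)$) and, having linear quotients by \cite[Lemma 1.3]{HT}, has a linear resolution by \cite[Lemma 4.1]{CH} (this gives $(b)$).

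The substantial point is $(d)\Rightarrow(a)$. By Corollary~\ref{newyear} it suffices to show that $I(P)$ is generated in a single degree for every $P\in\P(S)$. Fix $P$ and set $J=I(P)$; then $J$ is a squarefree monomial ideal, and by $(d)$ together with the identity above, $J^k$ is generated in a single degree for some $k\ge1$. Hence everything reduces to the claim: \emph{if $J$ is a squarefree monomial ideal such that $J^k$ is generated in a single degree for some $k\ge1$, then $J$ is generated in a single degree.} I would argue by contradiction. Let $d<D$ be the smallest and largest degrees of elements of $G(J)$. If $u\in G(J)$ has degree $d$, then $u^k\in G(J^k)$, since every element of $J^k$ has degree $\ge kd$ so $u^k$ cannot be a proper multiple of an element of $J^k$; hence $J^k$ is generated in degree $kd$. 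Let $L$ be the ideal generated by the degree-$d$ elements of $G(J)$. Every monomial of $J^k$ of degree $kd$ must be a product of $k$ (not necessarily distinct) degree-$d$ generators of $J$, so the degree-$kd$ component of $J^k$ is contained in $L^k$; since $J^k$ is generated in degree $kd$, this forces $J^k=L^k$. But $L\subsetneq J$: a generator $v\in G(J)$ of degree $D$ is divisible by no degree-$d$ generator of $J$, hence $v\notin L$. For such a $v$ we then have $v^k\in J^k=L^k\subseteq L$, so some degree-$d$ generator $u$ of $J$ divides $v^k$; as $u$ and $v$ are both squarefree, $u\mid v^k$ forces $u\mid v$, contradicting the minimality of $G(J)$ since $u\ne v$. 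This proves the claim and hence $(d)\Rightarrow(a)$.

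The main obstacle is exactly this last claim — that generation of a single power $J^k$ in one degree propagates back to $J$. The key point is that $J$ is squarefree: the step $u\mid v^k\Rightarrow u\mid v$ would fail for arbitrary monomials, and this is precisely where the squarefree hypothesis of the corollary is used in an essential way.
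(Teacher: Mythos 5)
Your proof is correct and takes essentially the same approach as the paper: after reducing $(d)\Rightarrow(a)$ to Corollary~\ref{newyear} via the identity $I^k(P)=I(P)^k$, both arguments establish that a squarefree $J$ with $J^k$ generated in a single degree must itself be generated in a single degree, by noting that $J^k$ is then generated in degree $kd$ (where $d$ is the minimal generator degree), so for any higher-degree generator $v$ the monomial $v^k$ is divisible by some degree-$d$ generator $u\in G(J)$, and squarefreeness forces $u\mid v$, contradicting minimality of $G(J)$. Your introduction of the auxiliary ideal $L$ and the equality $J^k=L^k$ is a mild elaboration of the paper's more direct phrasing, but the underlying argument is identical.
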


 \begin{proof}
 (a)\implies (b): Since $I$ is a matroidal,  $I^k$ is polymatroidal for all $k$ (see \cite[Theorem 5.3]{CH}). Hence by \cite[Corollary 3.2]{HRV}, $I^k(P)$ is polymatroidal for all $P\in \P(S)$. So   $I^k(P)$ has a linear resolution for all $P\in \P(S)$ and all $k$.

 The implications (b)\implies (c) \implies (d), and (b)\implies (e)\implies (d) are trivial.

(d)\implies (a): By Corollary \ref{newyear} it is enough to show that $I(P)$ is generated in a single degree for all $P$.  By assumption we know that  $ (I(P))^k$ (which is equal to $I^k(P)$) is generated in a single degree. Thus, since $I(P)$ is a squarefree, the desired conclusion follows once we have shown  that  if $J$ is squarefree monomial ideal and $J^k$ is generated in a single degree, then $J$ is generated in a single degree as well. Let $s$ be  the  smallest degree of a generator of $J$  and assume  that there exists $v\in G(J)$ with $\deg(v)=t$, $t>s$.  Then our assumption implies that  $J^k$ is generated in degree $sk$. Since $v^k\in J^k$ and $\deg(v^k)=tk>sk$,  there exist $u_1,\ldots,u_k\in G(J)$ such that $\prod_{i=1}^ku_i$ divides $v^k$ and $\deg(u_i)=s$ for each $i=1,\ldots,k$. Then  $u_1$ divides $v^k$, so since $u_1$ and $v$ are squarefree monomials, it follows that $u_1$ divides $v$, a contradiction.
 \end{proof}

\section{Monomial localizations of polymatroidal ideals}
\label{polymatroidalsection}
One would expect that Corollary~\ref{newyear} remains true if we replace  in its  statements  ``matroidal" by ``polymatroidal". This is the case for the equivalence of (a) and (b). However the following example shows that (a) is not equivalent to (e) if we replace ``matroidal" by ``polymatroidal" in statement (a).

Indeed, let $I=(x_1^2, x_1x_2, x_3^2,x_2x_3)$. Then $I$ is not polymatroidal, but all monomial localizations are generated in a single degree. On the other hand, the ideal $I$ in this example does not have a linear resolution. So one may expect that polymatroidal ideals can be characterized by the properties (c) and  (d) of Corollary~\ref{newyear}.

In the following  special cases we can prove this.

\begin{Proposition}
\label{somayehbelievesit}
Let $I\subset K[x_1,\ldots,x_n]$ be a monomial ideal generated in degree $2$. Then the following conditions are equivalent:
\begin{enumerate}
\item[(a)] The ideal $I$ is a polymatroidal.
 \item[(b)] For all $P\in \P(S)$ the ideal  $I(P)$ is polymatroidal.
 \item[(c)] For all $P\in \P(S)$ the ideal  $I(P)$  is generated in a single degree and has linear quotients with respect to the reverse lexicographic order of the generators.
 \item[(d)] For all $P\in \P(S)$ the ideal $I(P)$ has  a linear resolution.
\item[(e)] After relabeling of the variables there exist integers $0\leq k\leq m\leq n$ such that
\[
I=((x_1,\ldots,x_k)(x_1,\ldots,x_m), J),
\]
where $J$ is a  squarefree monomial ideal in the variables $x_{k+1},\ldots,x_m$ satisfying the following property:
\[
\text{$(*)\hspace{0.5cm} $If  $x_ix_j\in J$ and  $k+1\leq l\leq m$ with $l\neq i,j$, then $x_ix_l\in J$ or $x_jx_l\in J$.}
\]
\end{enumerate}
\end{Proposition}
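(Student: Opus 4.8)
The plan is to establish the cycle $(a)\Rightarrow(b)\Rightarrow(c)\Rightarrow(d)\Rightarrow(e)\Rightarrow(a)$. The first three links are soft. Since $I(P)=I\: x_C^{\,\ell}$ for $\ell\gg 0$, where $P=P_C$ and $x_C=\prod_{i\in C}x_i$, Theorem~\ref{easier} gives $(a)\Rightarrow(b)$; a polymatroidal ideal is generated in a single degree and has linear quotients with respect to the reverse lexicographic order by \cite[Lemma 1.3]{HT}, applied to each $I(P)$, which gives $(b)\Rightarrow(c)$; and $(c)\Rightarrow(d)$ is \cite[Lemma 4.1]{CH}. So the content is in $(d)\Rightarrow(e)$ and $(e)\Rightarrow(a)$.

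For $(e)\Rightarrow(a)$ I would verify the symmetric exchange property \cite[Theorem 12.4.1]{HH} by a bookkeeping argument on the types of the two generators involved. Set $L=(x_1,\dots,x_k)(x_1,\dots,x_m)$, which is a product of ideals generated by subsets of variables, hence polymatroidal, and note that $G(I)=G(L)\sqcup G(J)$ for degree reasons. For two generators of $L$ the exchange is inherited from $L$. For $u=x_ax_b\in G(L)$ with $a\le k$ and $v\in G(J)$, swapping a variable of index $>k$ out of $u$ lands back in $L$ (because $x_ax_c\in G(L)$ for every $c\le m$), while swapping $x_a$ out uses property $(*)$ for the edge $\supp(v)$ and the vertex $b$; the case $u,v\in G(J)$ is handled the same way, with $(*)$ for the edge $\{i,j\}=\supp(u)$ and a vertex $l$ of $\supp(v)$ producing exactly $x_ix_l$ or $x_jx_l$. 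This is a finite, routine check.

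For $(d)\Rightarrow(e)$: taking $P=\mm$ shows that $I$ has a linear resolution. Let $A=\{i:x_i^2\in G(I)\}$ and $V=\{i:\ x_i\text{ divides some }g\in G(I)\}$, and relabel so that $A=\{1,\dots,k\}$ and $V=\{1,\dots,m\}$. If $a,b\in A$ are distinct and $x_ax_b\notin G(I)$, then in the polarization of $I$, which is the edge ideal of a graph $\widetilde G$, the four polarization variables of $x_a$ and $x_b$ span two disjoint edges, so the complement of $\widetilde G$ has an induced $4$-cycle and is not chordal; hence $I$ has no linear resolution (by a theorem of Fröberg), a contradiction, so $x_ax_b\in G(I)$. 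If $a\in A$ and $j\in V\setminus A$, then $I(P_{\{j\}})$ is generated in a single degree by $(d)$ and contains the variables $x_c$ with $x_jx_c\in G(I)$ (at least one such $c$ exists as $j\in V$) together with the element $x_a^2$; being then generated in degree $1$, $x_a^2$ must be a multiple of one of those variables, which forces $x_ax_j\in G(I)$. Thus $(x_1,\dots,x_k)(x_1,\dots,x_m)\subseteq I$, the remaining generators of $I$ are quadrics with both indices in $V\setminus A$, hence squarefree and generating an ideal $J$ in $x_{k+1},\dots,x_m$, so $I=((x_1,\dots,x_k)(x_1,\dots,x_m),J)$.

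It remains to prove property $(*)$, and this is the main obstacle. The naive idea, localizing so as to keep only the three variables $x_i,x_j,x_l$, fails: as soon as $I$ has a generator (in particular a pure power $x_c^2$) supported off $\{i,j,l\}$, that localization collapses to the whole ring and carries no information. My plan to bypass this is an induction on the number of variables: for $C\neq\emptyset$ the ideal $I(P_C)$ lives in fewer variables and — being generated in a single degree, with a linear resolution, and with all of its own monomial localizations again of this kind (these are the $I(P_{C\cup C'})$, to which $(d)$ applies) — is polymatroidal by the induction hypothesis, or else is generated by variables. One thus knows that $I(P)$ is polymatroidal for every proper monomial prime $P$, and the task is to glue this with the single hypothesis that $I$ itself has a linear resolution in order to produce the exchange monomials demanded by $(*)$. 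Making this gluing step precise in the degree-$2$ setting is where the genuine work lies.
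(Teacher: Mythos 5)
Your framework (the cycle $(a)\Rightarrow(b)\Rightarrow(c)\Rightarrow(d)\Rightarrow(e)\Rightarrow(a)$) matches the paper, and your treatment of the soft implications and of $(e)\Rightarrow(a)$ is sound. In $(d)\Rightarrow(e)$, the derivation of $(x_1,\dots,x_k)^2\subset I$ via polarization and Fr\"oberg's theorem is a valid alternative to the paper's use of linear quotients (via \cite[Theorem 3.2]{HHZ}), and the argument that $(x_1,\dots,x_k)(x_1,\dots,x_m)\subset I$ using $I(P_{\{j\}})$ is essentially the paper's. But you explicitly stop short of establishing property~$(*)$, and this is a genuine gap: without $(*)$, condition (e) is not verified.

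The localization you declared hopeless — ``keep only the three variables $x_i,x_j,x_l$'' — is indeed useless, but that is not the one to use. The paper localizes at $P_{\{l\}}$, removing only the single variable $x_l$ while keeping all others. When $k>0$, the containment $(x_1,\dots,x_k)(x_1,\dots,x_m)\subset I$ already proved gives $x_1x_l\in I$ (since $l\le m$), hence $x_1\in I(P_{\{l\}})$; combined with the linear-resolution hypothesis from (d) this forces $I(P_{\{l\}})$ to be generated in degree~$1$. Now $x_ix_j\in J\subset I$ with $i,j\ne l$ gives $x_ix_j\in I(P_{\{l\}})$, so one of $x_i,x_j$ must lie in $I(P_{\{l\}})$, i.e.\ $x_ix_l\in I$ or $x_jx_l\in I$, and as $i,j,l\in\{k+1,\dots,m\}$ these land in $J$. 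When $k=0$, $I=J$ is squarefree and Corollary~\ref{newyear} gives matroidality of $J$, whence $(*)$. This is an elementary one-step argument; the induction-on-variables scheme you propose instead is not made precise and is not needed.
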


\begin{proof}
The implication (a)\implies (b)  is known (\cite[Corollary 3.2]{HRV}) and the implications (b)\implies (c) \implies (d) are known.

(d)\implies (e): After a relabeling of the variables we may assume the $x_i^2\in I$ if and only if $i\in[k]$. Suppose that $k\geq 2$ and let $1\leq i, j\leq k$ and $i\neq j$. Since $I$ is generated in degree 2 and has a linear resolution it  is known by \cite[Theorem 3.2]{HHZ} that $I$ has linear quotients with respect to a suitable order of the generators. We may assume that $x_i^2$ comes before $x_j^2$ in this order. Hence, since   $(x_i^2)\:x_j^2= (x_i^2)$,  there exists a monomial $u\in G(I)$ coming before $x_j^2$ such $(u)\:x_j^2=(x_i)$. It follows that $u=x_ix_j$. This shows that $(x_1,\ldots,x_k)^2\subset I$.

Let $\MI$ be the subset of elements $j\in [n]$ with the property that $j>k$ and $x_j|u$ for some $u\in G(I)$. After a relabeling of the variables $x_{k+1},\ldots,x_n$ we may assume that $\MI=\{k+1,\ldots,m\}$. Let $u=x_ix_j$ with $j\in \MI$ and $i\in[k]$. Then $x_i\in I(P_{\{j\}})$, and since $I(P_{\{j\}})$ has a linear resolution, all generators of  $I(P_{\{j\}})$ are of degree $1$. In particular, for any $t\in [k]$ we must have that $x_t \in G(I(P_{\{j\}}))$. This implies that $x_tx_j\in G(I)$. Thus we have shown that $(x_1,\ldots,x_k)(x_1,\ldots,x_m)\subset I$.

Let $J$ be the ideal generated by all $u\in G(I)$ which do not belong to the ideal  $(x_1,\ldots,x_k)(x_1,\ldots,x_m)$. Then  $J$ is a squarefree monomial ideal in the variables $x_{k+1},\ldots,x_m$. Let $x_ix_j\in J$ and $l$ an integer with  $k+1\leq l\leq m$ and $l\neq i,j$.

If $k=0$, then $x_lx_h\in J$ for some $h$ and  $J$ is matroidal by Corollary~\ref{newyear}.  Comparing $x_lx_h$ with $x_ix_j$ we see $x_ix_l\in J$ or $x_jx_l\in J$.

If $k>0$, then $x_1x_l\in I$. Therefore $x_1\in  I(P_{\{l\}})$, and hence  $I(P_{\{l\}})$ is generated in degree $1$, since it has a linear resolution. This implies that $x_ix_l\in J$ or $x_jx_l\in J$.

(e)\implies (a):  Let $u,v\in G(I)$. We have  to show that this pair satisfies the polymatroidal  exchange property. Since $(x_1,\ldots,x_k)(x_1,\ldots,x_m)$ is polymatroidal and $J$ is matroidal because of $(*)$, we may assume that $u\in  (x_1,\ldots,x_k)(x_1,\ldots,x_m)$ and $v\in J$.

Let $u=x_tx_l$ and $v=x_ix_j$, then the exchange property is satisfied because  $x_sx_i\in G(I)$ or $x_sx_j\in G(I)$ for all $s\neq i,j$, due to
$(*)$.
\end{proof}

For the proof of the next result we recall the following well-known fact.

\begin{Lemma}
\label{wellknown}
Let $J\subset S$ be a graded ideal with linear resolution and such that $\length (S/J)<\infty$. Then $J=(x_1,\ldots,x_n)^k$ for some $k$.
\end{Lemma}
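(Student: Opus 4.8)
The statement to prove is: if $J\subset S=K[x_1,\dots,x_n]$ is a graded ideal with linear resolution and $\length(S/J)<\infty$, then $J=\mm^k$ for some $k$, where $\mm=(x_1,\dots,x_n)$. First I would observe that $\length(S/J)<\infty$ forces $\sqrt{J}=\mm$, so $\mm^N\subseteq J$ for some $N$; in particular $J$ is $\mm$-primary and $\dim(S/J)=0$. Since $J$ has a linear resolution, it is generated in a single degree, say degree $d$: indeed a linear resolution means the graded Betti numbers $\beta_{i,j}(J)$ vanish unless $j=i+d$, and the generators sit in homological degree $0$, hence all have degree $d$. So it remains to show that an $\mm$-primary ideal generated in degree $d$ with linear resolution must equal $\mm^d$.

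Since $J$ is generated in degree $d$ we have $J\subseteq\mm^d$, and $\mm^d$ is itself generated in degree $d$ with a linear resolution. The cleanest route is a Hilbert-function / Betti-number comparison. Because $J$ has a linear resolution, its graded Betti numbers are determined by the Hilbert function of $S/J$ (the whole resolution is linear, so $\beta_{i,i+d}(J)$ can be read off from the Hilbert series via the numerator polynomial, which here is forced to have the shape $1-(\sum_i(-1)^i\beta_{i,i+d})t^{i+d}$ with all the "action" concentrated in a linear strand starting at $t^d$). Now compare with the Taylor/Eliahu–Kervaire picture for $\mm^d$: one has $J\subseteq\mm^d$ and both are generated in degree $d$, so $\beta_{0,d}(J)\le\beta_{0,d}(\mm^d)=\binom{n+d-1}{n-1}$. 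I would argue that equality must hold: if $J\subsetneq\mm^d$ then $\length(S/J)>\length(S/\mm^d)$, so the Hilbert function of $S/J$ dominates that of $S/\mm^d$ in some degree $<d$... but both agree in degrees $<d$ since both are $\mm^d$ in those degrees — the only place they can differ is degree $\ge d$. Here is the key point: $S/J$ has finite length, so its Hilbert function is eventually zero, whereas $(S/\mm^d)_j=0$ for $j\ge d$ already; if $J\subsetneq\mm^d$ then $(S/J)_j\ne 0$ for some $j\ge d$, i.e. $\mm^d\not\subseteq J$...

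Let me sharpen this into the actual argument I'd carry out. Let $a\ge d$ be the largest degree with $(S/J)_a\ne 0$ (exists since $\length(S/J)<\infty$ and $J\ne\mm^d$ would give $a\ge d$); equivalently $a+1$ is the smallest degree with $\mm^{a+1}\subseteq J$ but $J_a\ne\mm^a$. Because the resolution of $J$ is linear of length $\le n$, the "last" Betti number $\beta_{n-1,\,n-1+d}(J)$ governs the socle, and more usefully: a linear resolution means $\reg(J)=d$, hence $\reg(S/J)=d-1$, so $(S/J)_j=0$ for all $j\ge d$. That immediately gives $\mm^d\subseteq J$, and combined with $J\subseteq\mm^d$ yields $J=\mm^d$. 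So the real engine is just: \emph{linear resolution $\Rightarrow$ regularity equals generating degree $d$ $\Rightarrow$ $\reg(S/J)=d-1$ $\Rightarrow$ $(S/J)_{\ge d}=0$}, and the finite-length hypothesis is what makes "$(S/J)_{\ge d}=0$ together with $J\subseteq\mm^d$" collapse to equality (without finite length one only gets $J\supseteq\mm^d\cap(\text{stuff})$). I expect the only subtlety — the main obstacle — to be pinning down precisely why $\reg(S/J)=d-1$ forces the top of $S/J$ to vanish in degree $d$ rather than merely bounding it; this is the standard fact that for a module $M$ of finite length, $\reg(M)=\max\{j:M_j\ne 0\}$, which I would cite or prove in one line from the local cohomology characterization of regularity (only $H^0_\mm(M)=M$ is nonzero). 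With that in hand the proof is three lines: $\reg(J)=d\Rightarrow\reg(S/J)=d-1\Rightarrow(S/J)_j=0\ \forall j\ge d\Rightarrow\mm^d\subseteq J\subseteq\mm^d$.
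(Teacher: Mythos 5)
Your final argument is exactly the paper's proof: linear resolution gives $\reg(S/J)=d-1$, finite length gives $\reg(S/J)=\max\{j:(S/J)_j\neq 0\}$ (the paper cites \cite[Lemma 1.1]{CH} for this, you invoke the local-cohomology characterization — same fact), hence $(S/J)_j=0$ for $j\geq d$ and $J=\mm^d$. The exploratory Hilbert-function/Betti-number detour in the middle is discarded in favor of the same three-line argument the paper uses, so this is the same approach, correctly carried out.
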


\begin{proof}
Since $\length (S/J)<\infty$ it follows that  $\reg(S/J)=\max\{j\:\; (S/J)_j\neq 0\}$, see \cite[Lemma 1.1 ]{CH}. We may assume that  $J$ has a $k$-linear resolution. Therefore,   $\reg(S/J)=k-1$, and hence   $(S/J)_j=0$ for $j\geq k$. It follows that $J=(x_1,\ldots,x_n)^k$.
\end{proof}

\begin{Definition}
 {\em Given  positive integers $d, a_1,\ldots, a_n$. We let $I_{(d;a_1,\ldots,a_n)}\subset  S=K[x_1,\ldots,x_n]$ be the monomial ideal generated by the monomials $u\in S$ of degree $d$ satisfying  $\deg_{x_i}(u)\leq a_i$ for  all $i=1,\ldots,n$. Monomial ideals of this type are called ideals of {\em Veronese type}.}
\end{Definition}
Obviously, monomial ideals of Veronese type are polymatroidal.

\begin{Proposition}
\label{purepowers}
Let $I\subset K[x_1,\ldots,x_n]$ be a monomial ideal generated in degree $d$ and suppose that $I$ contains at least $n-1$ pure powers of the variables, say $x_1^d,\ldots,x_{n-1}^d$. Then the following conditions are equivalent:
\begin{enumerate}
\item[(a)] The ideal $I$ is a polymatroidal.
 \item[(b)] For all $P\in \P(S)$ the ideal $I(P)$ has  a linear resolution.
 \item[(c)] The ideals $I$ and $I(P_{\{n\}})$ have a linear resolution.
 \item[(d)] $I=I_{(d;d,\ldots,d,k)}$ for some $k$.
 \end{enumerate}

\end{Proposition}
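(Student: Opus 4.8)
The plan is to prove the cycle $(a)\Rightarrow(b)\Rightarrow(c)\Rightarrow(d)\Rightarrow(a)$. Three of these are immediate: $(a)\Rightarrow(b)$ is \cite[Corollary 3.2]{HRV} together with the fact that polymatroidal ideals have a linear resolution; $(b)\Rightarrow(c)$ is trivial, since $\mm=P_\emptyset$ and $P_{\{n\}}$ both lie in $\P(S)$ and $I(P_\emptyset)=I$; and $(d)\Rightarrow(a)$ holds because ideals of Veronese type are polymatroidal. Thus the entire content is $(c)\Rightarrow(d)$, and this is where I would concentrate.

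Assume $(c)$, and write $\bar S=K[x_1,\dots,x_{n-1}]$, $\bar\mm=(x_1,\dots,x_{n-1})\bar S$. Every minimal generator of $I\: x_n^\infty$ is a monomial not divisible by $x_n$ (if $x_n$ divided such a generator $u$, then $u/x_n$ would again lie in $I\: x_n^\infty$), so $I(P_{\{n\}})=I\: x_n^\infty=\bar J\,S$, where $\bar J=(I\: x_n^\infty)\cap\bar S$ is the monomial ideal of $\bar S$ obtained from $I$ by the substitution $x_n\mapsto1$. Extension of scalars turns the minimal graded free resolution of $\bar J$ over $\bar S$ into that of $\bar J\,S$ over $S$, so the hypothesis that $I(P_{\{n\}})$ has a linear resolution forces $\bar J$ to have one over $\bar S$; moreover $x_1^d,\dots,x_{n-1}^d\in\bar J$, so $\bar S/\bar J$ has finite length, and Lemma~\ref{wellknown} (applied in $\bar S$) gives $\bar J=\bar\mm^t$ for some $t\le d$. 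Set $k=d-t$ and let $I'=I_{(d;d,\dots,d,k)}$, the ideal generated by the monomials $u$ of degree $d$ with $\deg_{x_n}(u)\le k$. I would then verify the two monomial inclusions $I\subseteq I'\subseteq I\: x_n^\infty$: for $g\in G(I)$, writing $g=\bar g\,x_n^{b}$ with $\bar g\in\bar S$ and $b=\deg_{x_n}(g)$, one has $\bar g\in(I\: x_n^\infty)\cap\bar S=\bar\mm^t$, hence $b=d-\deg\bar g\le k$, so (since $I$ is generated in degree $d$) $I\subseteq I'$; conversely a generator $v=\bar v\,x_n^{b}$ of $I'$ has $\deg\bar v=d-b\ge t$, so $\bar v\in\bar\mm^t=\bar J\subseteq I\: x_n^\infty$ and therefore $v\in I\: x_n^\infty$.

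It then remains to prove $I'=I$. From $I'\subseteq I\: x_n^\infty$ the finitely generated module $I'/I$ is annihilated by a power of $x_n$; being a submodule of $S/I$, it is also annihilated by a power of $\bar\mm$ (because $x_1^d,\dots,x_{n-1}^d\in I$); hence it is annihilated by a power of $\mm$ and so has finite length, which gives $I'/I\subseteq H^0_{\mm}(S/I)$. On the other hand $I$ has a $d$-linear resolution, so $\reg(S/I)=d-1$ and $H^0_{\mm}(S/I)$ vanishes in all degrees $\ge d$, while $I'/I$, being a quotient of a module generated in degree $d$, vanishes in all degrees $<d$. Therefore $I'/I=0$, i.e.\ $I=I'=I_{(d;d,\dots,d,k)}$, which is $(d)$.

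The reduction to $\bar S$ and the two monomial inclusions are routine bookkeeping. The step I expect to be the crux — and the only one that really uses the linear resolution of $I$ itself, rather than just of $I(P_{\{n\}})$ — is the last one: turning ``$I'/I$ has finite length'' into ``$I'/I=0$'' via the bound $\reg(S/I)=d-1$. This is exactly the point that collapses if in $(d)$ one only asks all monomial localizations to be generated in a single degree, as the ideal $(x_1^2,x_1x_2,x_3^2,x_2x_3)$ shows.
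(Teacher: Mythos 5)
Your proof is correct, and the crucial implication $(c)\Rightarrow(d)$ is handled by a genuinely different argument than the paper's. The paper decomposes $I=I_0+I_1x_n+\cdots+I_kx_n^k$ with $I_j\subset K[x_1,\ldots,x_{n-1}]$, pins down $I_0=\nn^d$ and $I_k=\nn^{d-k}$ via Lemma~\ref{wellknown}, and then runs a descending induction on $j$ to show $I_{k-j}=\nn^{d-k+j}$: at each step it invokes the ``restriction lemma'' of Gasharov--Hibi--Peeva to get a linear resolution for the truncated ideal $J=\sum_{i\le k-j}I_ix_n^i$, computes $J\cap L$ for an auxiliary polymatroidal $L$, and pushes regularity through the Mayer--Vietoris sequence $0\to J\cap L\to J\oplus L\to I\to 0$. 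You instead go straight for $I:x_n^\infty=\bar\mm^t S$ (Lemma~\ref{wellknown} applied in $\bar S$), sandwich $I\subseteq I_{(d;d,\ldots,d,k)}\subseteq I:x_n^\infty$ with $k=d-t$ by elementary bookkeeping, and then observe that the quotient $I'/I$ has finite length and sits inside $H^0_\mm(S/I)$, which vanishes in degrees $\ge d$ because $\reg(S/I)=d-1$, while $I'/I$ lives only in degrees $\ge d$. This is shorter, avoids both the restriction lemma and the induction, and isolates the exact place where the hypothesis ``$I$ has a linear resolution'' (and not merely ``$I$ is generated in a single degree'') is indispensable — a point the paper's componentwise induction makes less transparent. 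One minor stylistic note: you could make explicit that the minimal free resolution of $\bar J$ over $\bar S$ base-changes to the minimal free resolution of $\bar J S$ over $S$ because $\bar S\hookrightarrow S$ is a polynomial (hence faithfully flat, graded-local) extension, so the two ideals have the same graded Betti numbers — but this is standard and your sentence already conveys it.
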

\begin{proof}
The implication (a)\implies (b) is known and the implications (b)\implies (c) and (d)\implies (a) are trivial. Thus it remains to show that (c) implies (d).

To this end   we write
\[
I=I_0+I_1x_n+\cdots +I_kx_n^k,
\]
where $I_j$ is a monomial ideal in $S'=K[x_1,\ldots,x_{n-1}]$ for all $j$.

Several times in our proof we will apply the following fact, which is an immediate consequence of \cite[Theorem 2.1]{GHP}: let $J\subset S$ be a monomial ideal with linear resolution, and let $a_1, \ldots, a_n$ be positive integers. Then the monomial  ideal $J'$ generated by the monomials $u\in G(J)$ with $\deg_{x_i}u\leq a_i$ for $i=1,\ldots, n$ has linear resolution as well.  We refer to this result as to the `restriction lemma'.

Applying the restriction lemma to $I$ it follows that $I_0$ has a $d$-linear resolution. Our assumption implies that $x_1^d,\ldots x_{n-1}^d\in I_0$. In particular, it follows that $\length(S'/I_0)<\infty$. Thus Lemma~\ref{wellknown} implies that $I_0=\nn^d$ where $\nn=(x_1,\ldots,x_{n-1})$.

Next we show by induction on $j$ that $I_{k-j}=\nn^{d-k+j}$. For $j=0$, we have to show that $I_k=\nn^{d-k}$. Indeed, by assumption the ideal $I(P_{\{n\}})=I_0+I_1+\cdots +I_k$ has a linear resolution. Since $I_j$ is generated in degree $d-j$, it follows that $I(P_{\{n\}})=I_k$ and moreover, that $\nn^d=I_0\subset I_k$. Hence $I_k$ has a $(d-k)$-linear resolution and contains  $x_i^{d-k}$ for $i=1,\ldots,n-1$. Again applying Lemma~\ref{wellknown}, it follows that $I_k=\nn^{d-k}$. This completes the proof of the induction begin.

Now assume that $j>0$ (and $\leq k-1$), and assume that $I_{k-l}=\nn^{d-k+l}$ for $l=0,\ldots,j-1$. We set
\[
J=I_0+I_1x_n+\cdots +I_{k-j}x_n^{k-j} \quad \text{and}\quad L=\nn^{d-k+j-1}x_n^{k-j+1}+\cdots +\nn^{d-k}x_n^k.
\]
The ideal $L$ is polymatroidal, and hence has a $d$-linear resolution. Applying the restriction lemma to $I$ we see that $J$ has a $d$-linear resolution.
We have $$J\cap L=(I_0\cap L)+(I_1x_n\cap L)+\cdots +(I_{k-j}x_n^{k-j}\cap L)=$$
$$I_0x_n^{k-j+1}+I_1x_n^{k-j+1}+\cdots+I_{k-j}x_n^{k-j+1}=
(I_0+I_1+\cdots+I_{k-j})x_n^{k-j+1}.$$
So $\reg(J\cap L)\geq d+1$. On the other hand by  the exact sequence
\[
0\rightarrow J\cap L\rightarrow J\oplus L\rightarrow I\rightarrow 0
\]
we have that $\reg(J\cap L)\leq \max\{\reg(J\oplus L),\reg(I)+1\}=d+1$. Then $\reg(J\cap L)=d+1$. Hence $J\cap L=(I_0+I_1+\cdots+I_{k-j})x_n^{k-j+1}=I_{k-j}x_n^{k-j+1}$. So $I_{k-j}$ has a $(d-k+j)$-linear resolution and contains $x_i^{d-k+j}$ for $i=1,\ldots,n-1$, because $\nn^d=I_0\subset I_{k-j}$. By Lemma~\ref{wellknown},  $I_{k-j}=\nn^{d-k+j}$.
Altogether we have shown that $I=\nn^d+\nn^{d-1}x_n+\cdots +\nn^{d-k}x_n^k=I_{(d;d,\ldots,d,k)}$, as desired.
\end{proof}

\begin{Corollary}
\label{dimtwo}
Let $I\subset K[x_1,x_2]$ be a monomial ideal. The following conditions are equivalent:
\begin{enumerate}
\item[(a)] $I$ is polymatroidal.
\item[(b)]  For all $P\in \P(S)$ the ideal $I(P)$ has  a linear resolution.
\item[(c)] $I$ has a linear resolution.
\end{enumerate}
\end{Corollary}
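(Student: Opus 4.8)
The plan is to prove the cycle $(a)\implies(b)\implies(c)\implies(a)$. The implication $(a)\implies(b)$ is already available: a monomial localization of a polymatroidal ideal is polymatroidal by \cite[Corollary 3.2]{HRV}, and polymatroidal ideals have a linear resolution by \cite[Lemma 1.3]{HT}. The implication $(b)\implies(c)$ is obtained by taking $C=\emptyset$, so that $P_C=(x_1,x_2)$ and $I(P_C)=I$. Thus everything rests on the implication $(c)\implies(a)$.

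For $(c)\implies(a)$ I would argue as follows. Since $I$ has a linear resolution it is generated in a single degree $d$, and we may assume $I\neq 0$; then every element of $G(I)$ has the form $x_1^bx_2^{d-b}$. Let $p$ be the minimum and $q$ the maximum of $\deg_{x_1}(u)$ as $u$ runs over $G(I)$, and set $w=x_1^px_2^{d-q}$. The key elementary observation is that $w$ divides every element of $G(I)$: if $u=x_1^bx_2^{d-b}\in G(I)$, then $x_1^p\mid u$ because $b\geq p$, and $x_2^{d-q}\mid u$ because $d-b\geq d-q$. Hence $I=wI'$, where $I'$ is the monomial ideal with $G(I')=\{u/w : u\in G(I)\}$; since multiplication by $w$ is a degree-$\deg(w)$ graded isomorphism $I'\to I$, the minimal graded free resolution of $I$ is that of $I'$ shifted by $\deg(w)$, so $I'$ has a $(q-p)$-linear resolution.

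Now $I'$ contains the two pure powers $x_1^{q-p}=x_1^qx_2^{d-q}/w$ and $x_2^{q-p}=x_1^px_2^{d-p}/w$, hence $\length(S/I')<\infty$. Lemma~\ref{wellknown} then yields $I'=\mm^{q-p}$ with $\mm=(x_1,x_2)$, and therefore $I=x_1^px_2^{d-q}\,\mm^{q-p}$. This ideal is exactly the Veronese type ideal $I_{(d;q,d-p)}$: its generators are the degree-$d$ monomials $x_1^bx_2^{d-b}$ with $p\leq b\leq q$, equivalently with $\deg_{x_1}\leq q$ and $\deg_{x_2}\leq d-p$. Since ideals of Veronese type are polymatroidal, $I$ is polymatroidal, closing the cycle.

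I do not expect a serious obstacle; the one step that must be set up carefully is this reduction, namely the realization that peeling off $x_1^px_2^{d-q}$ turns $I$ into an ideal of finite colength, which is precisely what makes Lemma~\ref{wellknown} applicable. As an alternative to Lemma~\ref{wellknown}, one could compute the first syzygies of $I$ directly: writing $G(I)=\{x_1^{a_1}x_2^{d-a_1},\dots,x_1^{a_m}x_2^{d-a_m}\}$ with $a_1<\dots<a_m$, the $i$-th minimal syzygy lies in degree $d+(a_{i+1}-a_i)$, so linearity of the resolution forces $a_{i+1}-a_i=1$ for all $i$, and one then checks the exchange property directly using the move $u\mapsto x_2(u/x_1)$. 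The argument through Lemma~\ref{wellknown} is shorter.
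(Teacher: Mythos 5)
Your proof is correct, and it follows the same strategy as the paper's: both start by factoring out the greatest common divisor $w=x_1^px_2^{d-q}$ of the generators (this is exactly the reduction $I=uJ$ in the published proof), reducing to the case where the residual ideal $I'$ contains pure powers of the variables. Where the two diverge is in how the final step is closed. The paper, having reduced to a primitive ideal, simply cites Proposition~\ref{purepowers}, whose proof in turn relies on the restriction lemma and an induction over the components $I_j$ — machinery that is overkill in two variables. You instead observe directly that after dividing by $w$ both pure powers $x_1^{q-p}$ and $x_2^{q-p}$ lie in $I'$, so $\length(S/I')<\infty$, and Lemma~\ref{wellknown} identifies $I'=\mm^{q-p}$ at once. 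Both arguments therefore rest on the same key lemma (Lemma~\ref{wellknown}), but your route unfolds the $n=2$ case of Proposition~\ref{purepowers} to its essential content and is more self-contained. The first-syzygy alternative you sketch at the end is also sound and would give yet another elementary route, though the Lemma~\ref{wellknown} version you wrote out is cleaner.
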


\begin{proof}
The conditions (b) and (c) are equivalent, because $I(P)$ is a principal ideal for $P\neq (x_1,x_2)$, and the implication (a) \implies (b) is known. For the proof of the implication (b) \implies (a) we write $I=uJ$, where $u$ is the greatest common divisor of the generators of $I$. $I$ is polymatroidal if and only if $J$ is polymatroidal, and $I$ satisfies (b) if and only if $J$ does. So we assume from the very beginning that greatest common divisor of  the generators $I$ is $1$. This implies that $I$ contains a pure power of $x_1$ or a pure power of  $x_2$. Thus the desired conclusion follows from Proposition~\ref{purepowers}.
\end{proof}

\begin{Definition}{\em
Let $I$ be a monomial ideal. We say that $I$ satisfies the {\em strong exchange property} if $I$ is generated in a single degree and
for all $u, v\in G(I)$ and for all $i,j$ with $\deg_{x_i}(u)> \deg_{x_i}(v)$ and $\deg_{x_j}(u)< \deg_{x_j}(v)$, one has
$x_j(u/x_i)\in I$.}
\end{Definition}

\begin{Proposition}
\label{dimthree}
Let $I\subset S=K[x_1,x_2,x_3]$ be a monomial ideal. The following conditions are equivalent:
\begin{enumerate}
\item[(a)] $I$ is polymatroidal.
\item[(b)] $I$ is polymatroidal satisfying the strong exchange property.
\item[(c)]  For all $P\in \P(S)$ the ideal $I(P)$ has  a linear resolution.
\end{enumerate}
\end{Proposition}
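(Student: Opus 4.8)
The plan is to prove the cyclic chain of implications (a)$\implies$(b)$\implies$(c)$\implies$(a), using Proposition~\ref{purepowers} (and Corollary~\ref{dimtwo}) as the workhorse for the last, substantive step. The implication (b)$\implies$(a) is a tautology, and (a)$\implies$(c) is already known since monomial localizations of polymatroidal ideals are polymatroidal by \cite[Corollary 3.2]{HRV} and hence have a linear resolution by \cite[Lemma 1.3]{HT} and \cite[Lemma 4.1]{CH}. So the two things to do are: (i) derive the strong exchange property from polymatroidality in three variables, and (ii) prove (c)$\implies$(a).

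For (a)$\implies$(b): let $I$ be polymatroidal in $K[x_1,x_2,x_3]$, generated in degree $d$, and take $u,v\in G(I)$ with $\deg_{x_i}(u)>\deg_{x_i}(v)$ and $\deg_{x_j}(u)<\deg_{x_j}(v)$ for some $i\neq j$. I must show $x_j(u/x_i)\in I$. In three variables, fixing such an $i$ and $j$, the only remaining index is the third one, call it $\ell$. Applying the exchange property to the pair $u,v$ for the index $i$, there is an index $t$ with $\deg_{x_t}(u)<\deg_{x_t}(v)$ and $x_t(u/x_i)\in I$; so either $t=j$ and we are done, or $t=\ell$ and $x_\ell(u/x_i)\in I$. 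In the latter case I still need to move weight onto $x_j$. Note $w:=x_\ell(u/x_i)\in G(I)$ satisfies $\deg_{x_i}(w)<\deg_{x_i}(v)$ is not quite forced, but more to the point $\deg_{x_j}(w)=\deg_{x_j}(u)<\deg_{x_j}(v)$, so comparing $v$ and $w$ at index $j$ and exchanging from $v$ gives some index back; iterating this ``bubbling'' argument in only three variables terminates and produces $x_j(u/x_i)\in I$. The bookkeeping here is finite but needs care; I expect this to be routine once the degree vectors are written out, using that the total degree is fixed so there are only finitely many monomials to pass through.

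For (c)$\implies$(a), the idea is to reduce to Proposition~\ref{purepowers}, which settles the case of an ideal in $n$ variables containing $n-1$ pure powers — here $n=3$, so two pure powers suffice. As in the proof of Corollary~\ref{dimtwo}, write $I=x^{\mathbf a}J$ where $x^{\mathbf a}$ is the gcd of $G(I)$; then $I$ is polymatroidal iff $J$ is, and condition (c) passes between $I$ and $J$, so we may assume $\gcd(G(I))=1$. The remaining work is to show that an ideal in three variables with gcd $1$ and with all monomial localizations having a linear resolution must contain at least two pure powers. Gcd $1$ already forces at least... well, it forces that no single variable divides every generator, which by a small combinatorial argument about supports gives one pure power, say $x_1^d\in I$; to get a second, restrict to a localization. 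Localizing at $P_{\{3\}}$ (i.e. setting $x_3=1$) yields an ideal in $K[x_1,x_2]$ with a linear resolution, hence polymatroidal by Corollary~\ref{dimtwo}; tracking which generators of $I$ survive and using that $I$ is generated in a single degree (which follows from $I$ itself having a linear resolution, part of (c)) lets me conclude that $I$ already contains $x_2^d$ or $x_3^d$ — if neither held, the localization structure together with gcd $1$ would be contradicted. Once two pure powers are in hand, Proposition~\ref{purepowers} gives that $I$ is of Veronese type, in particular polymatroidal.

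The main obstacle is the ``two pure powers'' reduction in the last step: gcd $1$ is a weak hypothesis, and squeezing a second pure power out of it requires genuinely using the linear resolution of the $n-1$ proper localizations, not just that $I$ is generated in one degree. I would handle this by a careful case analysis on the supports of the generators of $I$ — the possible support patterns in three variables are few — showing that in every case where $I$ fails to contain two pure powers, some localization $I(P_{\{i\}})$ either fails to be generated in a single degree or fails the Veronese/polymatroidal structure, contradicting (c). If a fully clean argument proves elusive, an alternative is to invoke Proposition~\ref{somayehbelievesit} when $I$ happens to be generated in degree $2$ and treat higher degrees by the restriction-lemma machinery of Proposition~\ref{purepowers} directly on $I$ and $I(P_{\{n\}})$.
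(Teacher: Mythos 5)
Your proof has a genuine gap in the step (c)$\implies$(a), and the gap is at exactly the place you flagged as the ``main obstacle.'' After dividing out the gcd you claim that gcd $=1$ together with (c) forces $I$ to contain at least two pure powers, so that Proposition~\ref{purepowers} applies. This is false: the ideal $I=(x_1x_2,x_1x_3,x_2x_3)=I_{(2;1,1,1)}$ in $K[x_1,x_2,x_3]$ has $\gcd(G(I))=1$, satisfies (c) (every $I(P_{\{i\}})$ equals $(x_j,x_k)$ and $I$ itself has a linear resolution), and is polymatroidal, yet it contains \emph{no} pure power of any variable. So ``gcd $1$ forces one pure power'' is already wrong, and the reduction to Proposition~\ref{purepowers} collapses; no case analysis on supports will rescue it, because the target claim is false. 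The paper instead proves (c)$\implies$(b) directly: after reducing to $\gcd(G(I))=1$ via \cite[Theorem 1.1]{HHV}, it sets $a_i=\max\{\deg_{x_i}(u)\colon u\in G(I)\}$ and shows $I=I_{(d;a_1,a_2,a_3)}$. The key points are (i) each localization $I(P_{\{i\}})$ lives in two variables, so by Corollary~\ref{dimtwo} it is polymatroidal and hence a ``Veronese interval'' $(x_j^rx_k^s\colon r+s=d-a_i,\ e\le r\le f)$, and a short argument rules out $e>0$ or $f<d-a_i$; (ii) one then enlarges $I$ to auxiliary ideals $J_{b_1,b_2,b_3}$ and shows inductively (via a short exact sequence with a free $S/(x_2,x_3)$-module quotient) that each has a $d$-linear resolution, culminating in $J_{d,d,d}=\mm^d$, which forces $I=I_{(d;a_1,a_2,a_3)}$. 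None of this passes through pure powers.

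A secondary issue: your plan requires proving (a)$\implies$(b) separately, and your ``bubbling'' sketch does not obviously terminate at the right monomial. Starting from $u$ with $u_i>v_i$, $u_j<v_j$, one exchange may yield $w=x_\ell(u/x_i)$, but further exchanges applied to $w$ (against $v$) produce monomials like $x_j(w/x_i)$ or $x_j(w/x_\ell)$, and only the second of these equals the target $x_j(u/x_i)$; the exchange axiom does not let you choose which one you get, and your own case analysis leaves exactly the problematic subcase open. The paper sidesteps this entirely: once (c)$\implies$(b) is proved, (a)$\implies$(b) follows for free from (a)$\implies$(c)$\implies$(b), so no direct combinatorial argument is needed. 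If you insist on proving (a)$\implies$(b) directly, the clean route is structural rather than iterative: in three variables the base polytope of a discrete polymatroid is cut out only by bounds $c_i\le x_i\le b_i$ on single coordinates (the rank constraint for a two-element subset $\{j,k\}$ is equivalent to a lower bound on $x_i$), so $G(I)$ is automatically a shifted Veronese-type set, which satisfies strong exchange. But that is a different argument from the one you sketched.
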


\begin{proof}
The implications (b)\implies (a) and (a) \implies (c) are known. This it remains to be shown that (c) implies (b). Let $I=uJ$ where $u$ is the greatest common divisor of the generators of $I$. It is known \cite[Theorem 1.1]{HHV} that $I$ is polymatroidal satisfying the strong exchange property, if and only if $J$ is of Veronese type. Since $I(P)$  has  a linear resolution for all $P\in P(S)$ if and only if the same holds true for all $J(P)$, we may assume from the very beginning that $u=1$, and then have to show that $I$ is of Veronese type. Let $a_i=\max\{\deg_{x_i}(u)\:\; u\in G(I)\}$ for $i=1,\ldots,3$. We claim that $I=I_{(d;a_1,a_2,a_3)}$ where $d$ is the common degree of the generators of $I$. We first show that for each $i$  the set of monomials
\[
\mathcal{A} =\{u\in K[x_1,x_2,x_3]\:\; \deg(u)=d, \deg_{x_i}(u)=a_i \text{ and } \deg_{x_j}(u)\leq a_j \text{ for } j\neq i \}
\]
belongs to $I$.

Indeed, (c) implies that $I(P_{\{i\}})$ is generated by the monomials $v\in K[x_j,x_k]$ such that $vx_i^{a_i}\in I$ and has a linear resolution. Therefore, by Corollary~\ref{dimtwo}, $I(P_{\{i\}})$ is polymatroidal. Hence there exist numbers $0\leq e\leq f\leq d-a_i$ such that
\[
I(P_{\{i\}})=(x_j^rx_k^s\:\; r+s=d-a_i, r\leq a_j, s\leq a_k \text{ and }e\leq r\leq f).
\]
Assume now that $\mathcal{A}\not\subset I$. Then it follows $e>0$ or $f<d-a_i$. We may assume that $e>0$. Therefore, $x_k^{d-a_i}x_i^{a_i}\not \in I$. On the other hand, since the greatest common divisor of the elements of $G(I)$ is equal to $1$, it follows that there exists monomial $x_k^{d-b}x_i^{b}\in I$  with $b<a_i$. Hence $x_k^{d-b}\in I(P_{\{i\}})$, a contradiction because $I(P_{\{i\}})$ does not contain a pure power of $x_k$.

In order to complete the proof of the claim, we introduce the following ideals $J_{b_1,b_2,b_3}$ with $a_i\leq b_i\leq d$ for $i=1,2,3$. The ideal  $J_{b_1,b_2,b_3}$ is generated by all generators of $I$ and all monomials $x_1^{r_1}x_2^{r_2}x_3^{r_3}$ of degree $d$ such that $r_j\leq b_j$ for all $j$ and there exists $i\in[3]$ with $a_i\leq r_i\leq b_i$. We will show by induction on $b_1+b_2+b_3$  that  $J_{b_1,b_2,b_3}$  has a linear resolution for all $b_i$. In particular,  $J_{d,d,d}$ has a linear resolution.  Hence by  Lemma~\ref{wellknown},  $J_{d,d,d}=(x_1,x_2,x_3)^d$  since  $J_{d,d,d}$  contains the pure powers $x_i^d$. This then implies that $I=I_{(d;a_1,a_2,a_3)}$.

The induction begin with $b_1+b_2+b_3=a_1+a_2+a_3$ is trivial because in that case  $a_i=b_i$ and $J_{a_1,a_2,a_3}=I$, which by assumption has a linear resolution. Now assume that $b_1+b_2+b_3>a_1+a_2+a_3$. Then $b_i>a_i$ for some $i$, say for $i=1$. By induction hypothesis the ideal $H=J_{b_1-1,b_2,b_3}$ has a $d$-linear resolution. Let $J=J_{b_1,b_2,b_3}$, and consider the exact sequence
\[
0\To H \To J\To J/H\To 0.
\]
The module $J/H$ is annihilated by $x_2$ and $x_3$. Therefore, $J/H$ is an $S/(x_2,x_3)$-module generated by the residue classes of the elements $vx_1^{b_1}$ with $v\in K[x_2,x_3]$ of degree $d-b_1$. Since no power of $x_1$ annihilates the generators of $J/H$ it follows that $J/H$ is a free $S/(x_2,x_3)$. It follows that $J/H$ has a $d$-linear resolution. Therefore we conclude from the above exact sequence that $J$ has a $d$-linear resolution.
\end{proof}

\begin{Proposition}
\label{anothercase}
Let $I\subset S=K[x_1,\ldots,x_n]$ be a monomial ideal with no embedded prime ideals such that $I(P)$ has a linear resolution for all $P\in V^*(I)$, and let $\Ass(S/I)=\{P_1,\ldots,P_r\}$. Let $\mm=(x_1,\ldots,x_n)$ be the graded maximal ideal of $S$.  Then the following holds:
\begin{enumerate}
\item[(a)] If $P_i+P_j=\mm$ for all $i\neq j$, then $I$ is polymatroidal.
\item[(b)] If  $r\leq 2$, then  $I$ is a transversal polymatroidal ideal.  If $r=3$, then either $I$ is again  a transversal polymatroidal ideal or $I$ is a matroidal ideal generated in degree $2$ of the form $I=P_1\sect P_2\sect P_3$ such that  $\Sect_{i=1}^3G(P_i)=\emptyset$ and  $G(P_i)\union G(P_j)=\{x_1,\ldots,x_n\}$ for all $i\neq j$. 
\item[(c)] If $\height(I)=n-1$, then $I$ is polymatroidal.
\end{enumerate}
\end{Proposition}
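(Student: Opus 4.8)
The plan rests on one reduction common to all three parts. Write the associated (monomial) primes as $P_i=(x_\ell:\ell\in A_i)$ with $A_i\subseteq[n]$, put $C_i=[n]\setminus A_i$, and use that $I$ has no embedded primes to write $I=\bigcap_{i=1}^{r}Q_i$ with $Q_i$ the $P_i$-primary component. First I would show $Q_i=P_i^{a_i}$ for some $a_i\ge 1$: localizing at $P_i$, the other $P_j$ are incomparable with $P_i$, so $Q_jS_{P_i}=S_{P_i}$ for $j\ne i$ and hence $I(P_i)=Q_i(P_i)$; being $P_i$-primary, the monomial ideal $Q_i$ has no generator divisible by a variable outside $P_i$, so $Q_i=Q_i(P_i)$ already lies in $K[x_\ell:\ell\in A_i]$, where it is primary to the graded maximal ideal and (since $I\subseteq P_i$, so $I(P_i)$ is among the ideals assumed to have a linear resolution) has a linear resolution; therefore $Q_i=(x_\ell:\ell\in A_i)^{a_i}=P_i^{a_i}$ by Lemma~\ref{wellknown}. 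Thus in all parts $I=\bigcap_{i=1}^{r}P_i^{a_i}$, and since $I=I(\mm)$ has a linear resolution, $I$ is generated in a single degree $d$; that is, $I=S\cdot I_d$, where $I_d$ consists of the monomials $v$ of degree $d$ with $\sum_{\ell\in C_i}\deg_{x_\ell}(v)\le d-a_i$ for every $i$.

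For (a): the hypothesis $P_i+P_j=\mm$ is equivalent to the $C_i$ being pairwise disjoint, and then $G(I)=I_d$ is the base set of a discrete polymatroid — equivalently, one checks the exchange property directly: replacing $x_s$ by $x_t$ in a generator only increases the degree-sum in the (unique, by disjointness) block $C_j\ni t$, and a counting argument on a tight block supplies an admissible target $t$ — so $I$ is polymatroidal. For (c): since $\height I=n-1$ and $I$ has no embedded primes, every minimal prime has height exactly $n-1$, hence is some $P_{\{\ell\}}=(x_m:m\ne\ell)$ with $\ell$ in a set $T\subseteq[n]$; then $C_\ell=\{\ell\}$ for $\ell\in T$, so $I=(v:\deg v=d,\ \deg_{x_\ell}(v)\le d-a_\ell\ \text{for }\ell\in T)$ is an ideal of Veronese type, hence polymatroidal. (Part (c) is the case of (a) with singleton $C_i$; the argument just identifies the shape.)

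For (b): here $r\le 3$ and $I=\bigcap_{i=1}^{r}P_i^{a_i}$. For $r=1$, $I=P_1^{a_1}$ is transversal. For $r=2$: if $A_1\cap A_2\ne\emptyset$, pick $\ell_0\in A_1\cap A_2$, $\ell_1\in A_1\setminus A_2$, $\ell_2\in A_2\setminus A_1$; then $x_{\ell_0}^{\max(a_1,a_2)}$ and $x_{\ell_1}^{a_1}x_{\ell_2}^{a_2}$ are minimal generators of $I$ of the distinct degrees $\max(a_1,a_2)$ and $a_1+a_2$, contradicting single-degree generation — so $A_1\cap A_2=\emptyset$ and $I=P_1^{a_1}P_2^{a_2}$ is transversal. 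For $r=3$: the same idea — exhibiting two minimal generators of $\bigcap_iP_i^{a_i}$ of different degrees — rules out every overlap pattern except (i) $A_1,A_2,A_3$ pairwise disjoint, where $I=P_1^{a_1}P_2^{a_2}P_3^{a_3}$ is transversal, and (ii) all three pairwise intersections nonempty, $A_1\cap A_2\cap A_3=\emptyset$, and $a_1=a_2=a_3=1$, in which case $I=P_1\cap P_2\cap P_3$ is the edge ideal of a complete tripartite graph, matroidal of degree $2$, which is the asserted special form.

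The main obstacle is the $r=3$ case of (b): a single overlapping pair forces a degree mismatch, but a third factor can in principle repair it, so ruling out the mixed patterns (some-but-not-all pairwise intersections nonempty; nonempty triple intersection; $a_i\ge 2$ together with overlapping $A_i$) requires a careful choice of the two witnessing generators — tracking which $A_i$ possess ``private'' indices and the arithmetic of the $a_i$ against $d$. The shared reduction of the first paragraph and parts (a), (c) are, by contrast, routine.
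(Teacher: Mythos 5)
Your common reduction --- primary decomposition into $P_i$-primary components, the observation that $Q_i=I(P_i)$ lands inside $K[x_\ell:\ell\in A_i]$ where it is $\mm$-primary with a linear resolution, and then Lemma~\ref{wellknown} to get $Q_i=P_i^{a_i}$ --- is exactly the paper's opening move, and your handling of the $r\leq 2$ cases in (b) and of part (c) (all minimal primes forced to have height $n-1$, so apply (a)) also matches. The one place you diverge on a step that is actually carried out is part (a): the paper simply cites Francisco and Van Tuyl \cite[Theorem 3.1]{FV} for the statement that an unmixed intersection $\bigcap P_i^{a_i}$ with $P_i+P_j=\mm$ which is generated in one degree is polymatroidal, whereas you propose to verify the exchange axiom directly on $I_d$. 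This is a legitimate alternative route, but as written it is only a sketch (``a counting argument on a tight block supplies an admissible target''); the tight-block bookkeeping is not entirely routine, since after deleting $x_s$ from $u$ one must argue that among the indices $t$ with $\deg_{x_t}u<\deg_{x_t}v$ at least one lies outside every tight block $C_j$ (with $j$ such that $s\notin C_j$), and the contradiction to be derived uses disjointness of the $C_j$, a summation over those blocks, and the fact that $v$ itself satisfies all the constraints. You would need to spell this out to make (a) self-contained.

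The genuine gap is the $r=3$ branch of (b), which you yourself flag as ``the main obstacle'' and then stop. This case is the bulk of the paper's proof and does not reduce to simply exhibiting two generators of different degree in $\bigcap P_i^{a_i}$. The paper first splits on whether some $P_i\subseteq P_j+P_k$. If no such containment holds, it uses the localization hypothesis at $P=P_j+P_k$ (here the \emph{running} assumption that $I(P)$ has a linear resolution for all $P\in V^*(I)$ is essential, not merely the $\mm$-primary case): $I(P_j+P_k)=P_j^{a_j}\cap P_k^{a_k}$ is then generated in one degree, which forces $G(P_j)\cap G(P_k)=\emptyset$ and hence transversality. If some $P_1\subseteq P_2+P_3$, then after reducing to full support the paper shows $P_i+P_j=\mm$ for all pairs (again using localization at $P_1+P_2$, not a bare degree comparison), concludes $I$ polymatroidal by (a), establishes $G(P_i)\cap G(P_j)\not\subseteq G(P_k)$, and only then runs a sequence of degree computations with carefully chosen witnesses ($x_i^{a_1}x_j^{a_3}$, $x_i^{a_1}x_j^{a_2}$, $x_i^ax_j^{a_1-a}x_k^{2a-a_1}$, $x_i^{a-1}x_j^{a-1}x_k$) to pin down first $a_2=a_3$, then $a_1=a_2$, and finally $a_1=a_2=a_3=1$. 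Your plan of ``two witnessing generators of different degrees'' captures the spirit of the last stage, but misses the use of the localization hypothesis at $P_j+P_k$, misses the reduction through part (a), and leaves all the degree arithmetic unproved. Also note your case (ii) needs the conclusion $A_i\cup A_j=[n]$ for all $i\neq j$ (equivalently $C_i\cap C_j=\emptyset$) as part of the classification; it does not follow merely from pairwise intersections being nonempty and the triple intersection being empty, and without it the ``complete tripartite'' description is unjustified.
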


\begin{proof}
Let $P\in\Ass(S/I)$. Since $I$ is a monomial ideal with no embedded prime ideals, it follows that $P$ is a minimal prime ideal of $I$. Therefore,  $\length(S(P)/I(P))<\infty$. Since $I(P)$ has a linear resolution, it follows from Lemma \ref{wellknown}, that $I(P)=P^k$ for some $k$.
Therefore $I=P_1^{a_1}\cap\cdots\cap P_r^{a_r}$.

(a) Since $I$ is generated in a single degree and $P_i+P_j=\mm$ for all $i\neq j$, it follows from  a result of Francisco and Van Tuyl \cite[Theorem 3.1]{FV} that $I$ is polymatroidal.

(b) If $r=1$, then $I=P_1^{a_1}$ is a transversal  polymatroidal.

If $r=2$, then $I=P_1^{a_1}\sect P_2^{a_2}$. Since $I$ is generated in a single degree we conclude that $G(P_1)\sect G(P_2)=\emptyset$. Therefore, $I=P_1^{a_1}P_2^{a_2}$, and the assertion follows.

Now let $r=3$, then $I=P_1^{a_1}\cap P_2^{a_2}\cap P_3^{a_3}$. We may assume that $I$ is full supported, i.e., $\{x_1,\ldots,x_n\}= \Union_{u\in G(I)}\supp(u)$.

First  assume that $P_i\nsubseteq P_j+P_k$ for all $i,j,k$.  Then,  since $I(P_j+P_k)=P_j^{a_j}\cap P_k^{a_k}$ is generated in a single degree, it follows that $G(P_j)\cap G(P_k)=\emptyset$ for $j\neq k$. Hence $I=P_1^{a_1} P_2^{a_2} P_3^{a_3}$
is a transversal polymatroidal ideal.

Next we may assume that $P_1\subseteq P_2+P_3$. In particular,  $P_2+P_3=\mm$, since $I$ is full supported. We claim that $P_i+P_j=\mm$ for all $i\neq j$ and hence by part (a), $I$ is polymatroidal. It remains  to be shown that $P_1+P_2=\mm$ and $P_1+P_3=\mm$.
Assume that $P_1+P_2\neq\mm$ and set $P=P_1+P_2$. Then $I(P)=P_1^{a_1}\cap P_2^{a_2}$. Since $I(P)$ is generated in a single degree, we have that $G(P_1)\cap G(P_2)=\emptyset$. So since
$P_1\subseteq P_2+P_3$, it follows that $P_1\subseteq P_3$, a contradiction.
Therefore $P_1+P_2=\mm$. Similarly we can see that $P_1+P_3=\mm$.

Now we want to show that  $G(P_i)\cap G(P_j)\nsubseteq G(P_k)$ for distinct $i$, $j$ and $k$. Assume  $G(P_i)\cap G(P_j)\subseteq G(P_k)$ for some $i,j$ and $k$. Let $x_\ell$ be a variable.  If $x_\ell\in G(P_i)\cap G(P_j)$, then $x_\ell\in G(P_k)$, and if $x_\ell\not\in G(P_i)\cap G(P_j)$, then we may assume that  $x_\ell\not\in G(P_i)$. In that case it follows that  $x_\ell\in G(P_k)$,  since $P_i+P_k=\mm$.  Therefore $P_k=\mm$, a contradiction.

Now we claim that $a_1=a_2=a_3$.
We may assume that $a_1\geq a_2\geq a_3$ and  that $I$ is generated in degree $d$. Let $x_i\in G(P_1)\cap G(P_2)\setminus G(P_3)$ and $x_j\in G(P_3)\setminus G(P_1)$. Then since $a_1\geq a_2$, it follows that $x_i^{a_1}x_j^{a_3}\in I$. So there exist integers $s\leq a_1$ and $t\leq a_3$ such that $x_i^sx_j^t\in G(I)$. Since
$x_i^sx_j^t\in P_3^{a_3}$ and $x_i\not\in P_3$, we have  $x_j^t\in P_3^{a_3}$, and so $t=a_3$. On the other hand, since $x_i^sx_j^t\in P_1^{a_1}$ and $x_j\not\in P_1$,
 it follows that $x_i^s\in P_1^{a_1}$, and so $s=a_1$. Hence $x_i^{a_1}x_j^{a_3}\in G(I)$. Therefore $d=a_1+a_3$. Now let $x_i\in G(P_1)\cap G(P_3)\setminus G(P_2)$ and $x_j\in G(P_2)\setminus G(P_1)$.
Then similarly $x_i^{a_1}x_j^{a_2}\in G(I)$, so $d=a_1+a_2$. Therefore  $a_2=a_3$. Set $a=a_2=a_3$. Next we show that $a_1<2a$. Assume $a_1\geq2a$. Let $x_i\in G(P_1)\cap G(P_2)$
and $x_j\in G(P_1)\cap G(P_3)$, then $x_i^{a_1-a}x_j^a\in I$. Hence $a_1=\deg(x_i^{a_1-a}x_j^a)\geq d=a_1+a$, so $a\leq 0$, a contradiction.
Now let $x_i\in G(P_1)\cap G(P_2)$, $x_j\in G(P_1)\cap G(P_3)$ and $x_k\in  G(P_3)$. Then  $x_i^ax_j^{a_1-a}x_k^{2a-a_1}\in I$. Therefore,
$2a=\deg(x_i^ax_j^{a_1-a}x_k^{2a-a_1})\geq d=a_1+a$, hence $a\geq a_1$, and so $a_1=a$.
 
Now we have  $I=P_1^a\cap P_2^a\cap P_3^a$. We claim  that $a=1$. The claim  implies that $I=P_1\cap P_2\cap P_3$. Hence, since $I$ is generated in a single degree we conclude that $G(P_1)\cap G(P_2)\cap G(P_3)=\emptyset$.
 
In order to prove the claim, assume to contrary that  $a>1$. Let $x_i\in G(P_1)\cap G(P_2)$, $x_j\in G(P_1)\cap G(P_3)$ and $x_k\in G(P_2)\cap G(P_3)$. Then $x_i^{a-1}x_j^{a-1}x_k\in I$, because $x_i^{a-1}x_j^{a-1}\in P_1^a$, $x_i^{a-1}x_k\in P_2^a$ and $x_j^{a-1}x_k\in P_3^a$.
So $2a-1=\deg(x_i^{a-1}x_j^{a-1}x_k)\geq d=2a$, a contradiction.

(c) If $r=1$, then  $I=P_1^{a_1}$ is  polymatroidal, and if $r>1$, the assertion follows from (a).
\end{proof}

Based on Proposition~\ref{somayehbelievesit},  Proposition~\ref{purepowers}, Corollary~\ref{dimtwo}, Proposition~\ref{dimthree} and Proposition~\ref{anothercase} and based on experimental evidence we are inclined   to make the following

\begin{Conjecture}
\label{mustbetrue}{\em
A monomial ideal $I$ is polymatroidal if and only if $I(P)$ has a linear resolution for all monomial prime ideals $P$.}
\end{Conjecture}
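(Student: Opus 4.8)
What follows is a strategy rather than a complete proof, since the assertion is conjectural. The implication from ``polymatroidal'' to ``$I(P)$ has a linear resolution for all $P$'' is already available: by \cite[Corollary 3.2]{HRV} every monomial localization of a polymatroidal ideal is polymatroidal, hence has linear quotients \cite[Lemma 1.3]{HT}, and therefore a linear resolution. So the whole content is the converse, and the natural plan is induction on the number $n$ of variables. Assume $I(P)$ has a linear resolution for every $P\in\P(S)$. Taking $P=\mm$ shows that $I$ itself has a linear resolution, so $I$ is generated in a single degree $d$. Writing $I=wI'$ with $w=\gcd(G(I))$, multiplication by $w$ identifies $I'$, up to a degree shift, with $I$ and $I'(P)$ with $I(P)$; since polymatroidality and the property of having a linear resolution are unaffected by such shifts, we may assume $\gcd(G(I))=1$. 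For $n\le 3$ the conjecture is precisely Corollary~\ref{dimtwo} together with Proposition~\ref{dimthree} (it is trivial for $n=1$), which provides the base of the induction.

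For the inductive step I would use that monomial localization is transitive: for monomial primes $Q\subseteq P$ one has $(I(P))(Q)=I(Q)$, so every monomial localization of $I(P)$ is again a monomial localization of $I$. Hence, for each monomial prime $P\subsetneq\mm$, the ideal $I(P)$ lives in fewer than $n$ variables and all of its monomial localizations have linear resolutions; by the induction hypothesis $I(P)$ is polymatroidal. This reduces the conjecture to the following local--global statement: \emph{if $I$ is generated in a single degree, has a linear resolution, satisfies $\gcd(G(I))=1$, and $I(P)$ is polymatroidal for every monomial prime $P\subsetneq\mm$, then $I$ is polymatroidal.} By the equivalence of (a) and (e) in Theorem~\ref{easier} this amounts to showing that $I\:u$ is generated in a single degree for \emph{every} monomial $u$; the colons that the hypothesis does not directly control are exactly those $I\:u$ which are not monomial localizations of $I$, i.e.\ those $u$ in which some variable occurs to a power strictly below its maximal exponent in $G(I)$. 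When $I$ is squarefree no such $u$ occurs, and one recovers Corollary~\ref{newyear}.

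To establish the local--global statement I would try to verify the exchange property of $I$ directly, feeding in the polymatroidality of each $I(P_{\{k\}})$ — which reflects the exchanges not involving $x_k$ — together with the tools already exercised in the paper: the restriction lemma of \cite{GHP} for truncating exponents while preserving a linear resolution; the regularity bookkeeping through exact sequences $0\to J\cap L\to J\oplus L\to I\to 0$ as in Propositions~\ref{purepowers} and \ref{dimthree}; and the structural results handling the cases of degree two (Proposition~\ref{somayehbelievesit}), of at least $n-1$ pure powers (Proposition~\ref{purepowers}), and of few associated primes (Proposition~\ref{anothercase}). The principal obstacle — and the reason the statement remains only a conjecture — is precisely this reassembly: the special cases above rest on genuinely different mechanisms, embedded primes are not governed by Lemma~\ref{wellknown} (which forces $I(P)=P^{k}$ only at the minimal primes), and there is at present no uniform device for recovering polymatroidality of $I$ from that of all its proper monomial localizations. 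Bridging the gap between the saturated colons $I\:x_C^\infty=I(P_C)$, which the hypothesis controls, and the non-saturated colons $I\:u$ appears to require a genuinely new idea.
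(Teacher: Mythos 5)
This statement is a \emph{conjecture}, and the paper provides no proof of it — only the easy direction, a counterexample to a weakened form, and several special cases (Propositions~\ref{somayehbelievesit}, \ref{purepowers}, \ref{dimthree}, \ref{anothercase}, Corollary~\ref{dimtwo}, Proposition~\ref{biequalsai}). Your write-up correctly recognizes this and does not claim a complete proof; it accurately lays out the known direction (via \cite[Corollary 3.2]{HRV} and \cite[Lemma 1.3]{HT}), the standard reductions (divide out $\gcd(G(I))$, take $P=\mm$ to get a $d$-linear resolution), and — most importantly — the central obstruction: the hypothesis controls only the saturated colons $I:x_C^\infty=I(P_C)$, whereas Theorem~\ref{easier}(e) requires control of \emph{all} colons $I:u$, and there is no known mechanism to bridge these. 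Your inductive reduction via transitivity of monomial localization (so that, for $P\subsetneq\mm$, every localization of $I(P)$ is again a localization of $I$, hence $I(P)$ is polymatroidal by induction) is a sound observation and is implicit in the paper's remark that the conjecture is equivalent to: $I$ with a linear resolution is polymatroidal iff $I(P_{\{i\}})$ is polymatroidal for all $i$.

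One caution about your final paragraph. You propose to ``verify the exchange property of $I$ directly, feeding in the polymatroidality of each $I(P_{\{k\}})$.'' Example~\ref{threexamples}(c) shows that this information alone is not enough: there the ideal has linear relations and all $I(P_{\{i\}})$ are polymatroidal, yet $I$ is not. So any successful argument must genuinely use the linear resolution hypothesis at $\mm$ (and at all intermediate primes), not merely at the coheight-one primes. That is consistent with your own diagnosis — the special cases in the paper rest on distinct mechanisms (the restriction lemma, regularity counting via Mayer–Vietoris sequences, Lemma~\ref{wellknown} at minimal primes, Francisco–Van Tuyl for $P_i+P_j=\mm$) and do not currently assemble into a uniform proof. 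Your conclusion that a new idea is required matches the state of affairs in the paper.
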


\medskip
The following examples show  that the localization condition of Conjecture~\ref{mustbetrue}  can not be weakened.

\begin{Example}
\label{threexamples}
{\em (a) The ideal  $I=(x_1x_3^2,x_1^2x_3,x_1x_2x_3, x_2^2x_3)$  and all $I\: x_i$ have a linear resolution, but $I$ is not polymatroidal.

(b)  The ideal  $I=(x_1^3,x_1^2x_2, x_1^2x_3,x_2x_3x_4, x_1x_2x_3, x_1x_3x_4, x_1^2x_4)$ and all $I(P_{\{i\}})$   have a linear resolution, but $I$ is not polymatroidal.

(c) The ideal $I=(x_1^3,x_1^2x_2,x_1^2x_3, x_2^3,x_1x_2^2,x_2^2x_3, x_3^3,x_1x_3^2,x_2x_3^2)$ has  linear relations, and all $I(P_{\{i\}})$
are polymatroidal, but $I$ is not polymatroidal. }
\end{Example}

\medskip
For the proof of Proposition~\ref{anothercase}(c) one could skip the assumption that $I$ has no embedded components, if one could prove the following statement: $(*)$ Let $I\subset S$ be a monomial ideal with linear resolution and such that $I\mm$ is polymatroidal. Then $I$ is polymatroidal.

Indeed, assuming $(*)$ the following can be shown: Let $I=J\sect Q$ and assume that $I$ has  a linear resolution, $J$ is componentwise polymatroidal and $Q$ is $\mm$-primary, then $I$ is polymatroidal. To see this, observe that $I\mm^{j-d}= I_{\langle j\rangle}=J_{\langle j\rangle}$ for $j\gg 0$, where $d$ is the degree of the generators of $I$. Here, for any graded ideal $L$, we denote  by $L_{\langle j\rangle}$ the ideal generated by the $j$th graded component of $L$.  Since $J$ is componentwise polymatroidal it follows that $I\mm^{j-d}$ is polymatroidal. The assertion now follows by induction on $j-d$ and by using $(*)$.

\medskip
Observe  that $(*)$ holds if our Conjecture~\ref{mustbetrue} is satisfied, because $I(P)=(I\mm)(P)$ for all $P\neq \mm$.

\medskip
We believe that if $I$ is a polymatroidal ideal generated in degree $d$, then $(I:\mm)_{\langle d-1\rangle}$ is polymatroidal. It can be shown that this is the case at least when $I$ is a polymatroidal ideal satisfying the strong exchange property. Assuming this is true in general, the above condition $(*)$ follows, because $I=I\mm:\mm$, if $I$ has a linear resolution. Obviously we have $I\subseteq I\mm:\mm$. Assume the inclusion is strict. Then there exists a homogeneous element   $f\in  I\mm:\mm\setminus I$. Thus the residue class  of $f$ in $S/I$ is a non-zero socle element of $S/I$. Say,  $I$ has a $d$-linear resolution.  Then it follows that $\deg(f)=d-1$. On the other hand, $I\mm$ has $(d+1)$-linear resolution. Therefore $I\mm:\mm$ is generated in degree $\geq d$, a contradiction since $f\in I\mm:\mm$.

\medskip
Note that  our conjecture is equivalent to the following statement: let $I$ be monomial ideal with linear resolution. Then  $I$ is polymatroidal if and only if  $I(P_{\{i\}})$ is polymatroidal for all $i$. We prove this version of Conjecture~\ref{mustbetrue} under additional assumptions.

\begin{Proposition}
\label{biequalsai}
Let $I$ be a monomial ideal with $d$-linear resolution, and assume that $I(P_{\{i\}})=I_{(d-a_i; a_1,\ldots,a_{i-1},a_{i+1},\ldots, a_n)}$ for $i=1,\ldots,n$. Then $I=I_{(d; a_1,\ldots, a_n)}$.
\end{Proposition}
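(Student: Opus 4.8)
The plan is to prove the two inclusions $I\subseteq V$ and $V\subseteq I$, where $V:=I_{(d;a_1,\ldots,a_n)}$. Since $I$ has a $d$-linear resolution it is generated in degree $d$, so $G(I)$ consists of monomials of degree $d$, and by definition $G(V)$ is exactly the set of all degree-$d$ monomials whose $x_i$-exponent is at most $a_i$ for every $i$; we may assume $I\neq 0$ (the case $d=0$ being trivial). For $I\subseteq V$ I would argue as in the proof of Theorem~\ref{easier}, (a)$\Rightarrow$(b): fix $u\in G(I)$ and an index $i$; substituting $x_i\mapsto 1$ sends $u$ to $u/x_i^{\deg_{x_i}(u)}$, which therefore lies in $I(P_{\{i\}})=I_{(d-a_i;\,a_1,\ldots,a_{i-1},a_{i+1},\ldots,a_n)}$. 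This ideal is generated in degree $d-a_i$, so every nonzero element of it has degree $\geq d-a_i$; comparing degrees gives $\deg_{x_i}(u)\leq a_i$. As $i$ was arbitrary, $u\in G(V)$, and $I\subseteq V$.

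For the reverse inclusion I would show $G(V)\subseteq I$. Fix a monomial $v=x_1^{c_1}\cdots x_n^{c_n}$ of degree $d$ with $c_i\leq a_i$ for all $i$, and fix an index $i$. The monomial $v/x_i^{c_i}$ has degree $d-c_i\geq d-a_i$, and each of its exponents in the variables $x_k$ with $k\neq i$ is at most $a_k$; hence some sub-monomial $w\mid v/x_i^{c_i}$ has degree exactly $d-a_i$ with $\deg_{x_k}(w)\leq a_k$ for all $k\neq i$, so $w$ is one of the monomials defining $I_{(d-a_i;\,a_1,\ldots,a_{i-1},a_{i+1},\ldots,a_n)}=I(P_{\{i\}})=I\:x_i^{\infty}$. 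Therefore $v/x_i^{c_i}\in I\:x_i^{\infty}$, whence $v=x_i^{c_i}\cdot(v/x_i^{c_i})\in I\:x_i^{\infty}$ as well, since the latter is an ideal. As this holds for every $i$, $v\in\bigcap_{i=1}^n(I\:x_i^{\infty})$. Now, passing to a monomial primary decomposition $I=\bigcap_\ell\mathfrak q_\ell$, the ideal $I\:x_i^{\infty}$ is the intersection of those $\mathfrak q_\ell$ whose radical does not contain $x_i$, so $\bigcap_{i=1}^n(I\:x_i^{\infty})$ is the intersection of those $\mathfrak q_\ell$ with $\sqrt{\mathfrak q_\ell}\neq\mm$, i.e.\ $\bigcap_{i=1}^n(I\:x_i^{\infty})=I\:\mm^{\infty}$. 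Finally, $\reg(I)=d$ because $I$ has a $d$-linear resolution, so $\Coh{0}{S/I}=(I\:\mm^{\infty})/I$ vanishes in all degrees $\geq d$; as $\deg(v)=d$, this forces $v\in I$. Hence $G(V)\subseteq I$, i.e.\ $V\subseteq I$, and together with the first inclusion we obtain $I=V=I_{(d;a_1,\ldots,a_n)}$.

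The first inclusion is routine; the substance lies in the second, which rests on two standard facts. First, $\bigcap_{i=1}^n(I\:x_i^{\infty})=I\:\mm^{\infty}$: this is immediate from monomial primary decomposition, since forming the $I\:x_i^{\infty}$ and then intersecting deletes exactly the $\mm$-primary component of $I$, which is also the effect of saturating with respect to $\mm$. Second, $\Coh{0}{S/I}_j=0$ for $j\geq\reg(I)$ — equivalently $(I\:\mm^{\infty})_j=I_j$ in that range — which is the general form of the elementary observation $\reg(S/J)=\max\{j\:\;(S/J)_j\neq 0\}$ used for the Artinian case in Lemma~\ref{wellknown}. The one step that needs a moment's care is the choice of the sub-monomial $w$ of degree $d-a_i$ in the second paragraph; this causes no difficulty, because the hypothesis $v\in G(V)$ already bounds each exponent of $v/x_i^{c_i}$ by the corresponding $a_k$, so there is no exponent obstruction to peeling off a sub-monomial of the prescribed degree. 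I do not expect any of the exchange-property or linear-quotients machinery from the earlier results of this section to be needed here.
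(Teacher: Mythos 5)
Your proof is correct, and it takes a genuinely more direct route than the paper's. The paper introduces the auxiliary ideal $J=\sum_{k=1}^n x_k^{a_k}I(P_{\{k\}})$, notes $J\subseteq I$ with $I/J$ of finite length, and then identifies the degree-$d$ component of $J:\mm^\infty$ \emph{twice}: once with $I$ (via the same regularity/saturation fact you invoke, namely $(I:\mm^\infty)_{\geq d}=I$ for a $d$-linear $I$) and once with $I_{(d;a_1,\ldots,a_n)}$ by a rather involved combinatorial argument showing $v\mm^s\subseteq J$ for each $v\in G(I_{(d;\ldots)})$ and a converse inclusion via the divisors $v_i\mid ux_i^r$. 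You bypass $J$ entirely: the inclusion $I\subseteq I_{(d;a_1,\ldots,a_n)}$ falls out of a one-line degree count using that $I(P_{\{i\}})$ is generated in degree $d-a_i$, and for the converse you show each generator of $I_{(d;a_1,\ldots,a_n)}$ lies in every $I:x_i^\infty$, hence in $I:\mm^\infty=\Sect_i(I:x_i^\infty)$, and then the same $\Coh{0}{S/I}_{\geq d}=0$ vanishing closes the argument. The identity $\Sect_i(I:x_i^\infty)=I:\mm^\infty$ for monomial ideals, which you use in place of the paper's ad hoc ideal $J$, is exactly what makes your version shorter; it is a standard consequence of monomial primary decomposition, as you note. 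Both proofs pivot on the same regularity fact, so the substance is shared, but your route to the inclusion $I_{(d;a_1,\ldots,a_n)}\subseteq I:\mm^\infty$ is considerably cleaner than the paper's $v\mm^s\subseteq J$ computation.
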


\begin{proof}
For $k=1,\ldots,n$, let $I_k=x_k^{a_k}I(P_{\{k\}})$ and set $J=\sum_{k=1}^nI_k$. Then $J\subseteq I$. We first show that $(J:\mm^\infty)_{\langle d\rangle}=I$. In fact, by the definition of $J$ it follows that $(I/J)_{x_k}=0$ for $k=1,\ldots,n$. Therefore,  $I/J$ is a module of finite length, and hence we get
\begin{eqnarray}
\label{stupidr}
I\subseteq (J:\mm^\infty)_{\geq d}\subseteq (I:\mm^\infty)_{\geq d}.
\end{eqnarray}
Here for any graded ideal $L$ we set $L_{\geq d}=\Dirsum_{i\geq d}L_i$.

Since  $I$  has $d$-linear resolution, it follows that  $(I:\mm^\infty)_{\geq d}=I$. Indeed, our assumption on $I$ implies that $I:\mm^\infty=I+H$ where $H$ is generated in degree $\leq d-1$. This follows from \cite[Corollary 20.19]{Eis}. Thus  $(I:\mm^\infty)_{\geq d}= I+\mm H_{\langle d-1\rangle}$. Since $\mm H_{\langle d-1\rangle}\subset I$, the desired conclusion follows. Thus in combination with (\ref{stupidr}) we see that $(J:\mm^\infty)_{\geq d}=I$. Since $I$ is generated in degree $d$, we even get $(J:\mm^\infty)_{\langle d\rangle}=I$.

Now we want to show that $(J:\mm^\infty)_{\langle d\rangle}=I_{(d; a_1,\ldots, a_n)}$. Let $u\in(J:\mm^\infty)_{\langle d\rangle}$ such that
$\deg(u)=d$ and $u\mm^r\subseteq J\subseteq I_{(d; a_1,\ldots, a_n)}$ for some integer $r\geq 0$. Then $ux_i^r\in I_{(d; a_1,\ldots, a_n)}$ for all $i\in[n]$. Hence
for all $i\in [n]$ there exists $v_i\in G(I_{(d; a_1,\ldots, a_n)})$ such that $v_i|ux_i^r$. Therefore, $\deg_{x_j}(v_i)\leq \deg_{x_j}(u)$ for all $j\neq i$. Since  $\deg(u)=\deg(v_i)=d$, it follows that $\deg_{x_i}(u)\leq \deg_{x_i}(v_i)\leq a_i$. This shows that $u\in I_{(d; a_1,\ldots, a_n)}$.  Hence we proved that $(J:\mm^\infty)_{\langle d\rangle}\subseteq I_{(d; a_1,\ldots, a_n)}$.

Now let $x_1^{b_1}\cdots x_n^{b_n}\in G(I_{(d; a_1,\ldots, a_n)})$, then $\sum_{i=1}^nb_i=d$ and $b_i\leq a_i$ for all $i\in[n]$. We claim that $x_1^{b_1}\cdots x_n^{b_n}\mm^s\subseteq J$ with $s=\sum_{i=1}^na_i-d$.
Let $x_1^{c_1}\cdots x_n^{c_n}\in G(\mm^s)$. Then $x_1^{b_1+c_1}\cdots x_n^{b_n+c_n}\in x_1^{b_1}\cdots x_n^{b_n}\mm^s$. If $b_i+c_i<a_i$ for all $i$, then $\sum_{i=1}^na_i=d+s=\sum_{i=1}^nb_i+\sum_{i=1}^nc_i<\sum_{i=1}^na_i$, a contradiction. Hence for convenience we may  assume that $b_1+c_1\geq a_1$, and show that $x_1^{b_1+c_1}\cdots x_n^{b_n+c_n}\in I_1=x_1^{a_1}I_{(d-a_1; a_2,\ldots, a_n)}$. Since  $b_1+c_1\geq a_1$, it is enough to show that $x_2^{b_2+c_2}\cdots x_n^{b_n+c_n}\in I_{(d-a_1; a_2,\ldots, a_n)}$.

We may assume that $b_i+c_i>a_i$ for $i=2,\ldots, t$  and $b_i+c_i\leq a_i$ for $i=t+1,\ldots, n$ with $1\leq t\leq n$.
Since $b_i\leq a_i$ for all $i$, it follows that
\[
\sum_{i=1}^ta_i+\sum_{i=t+1}^n(b_i+c_i)=\sum_{i=1}^ta_i+d-\sum_{i=1}^tb_i+\sum_{i=t+1}^nc_i=\sum_{i=1}^t(a_i-b_i)+\sum_{i=t+1}^nc_i+d\geq d.
\]
Hence $\sum_{i=2}^ta_i+\sum_{i=t+1}^n(b_i+c_i)\geq d-a_1$. This  implies that
\[
x_2^{a_2}\cdots x_t^{a_t}x_{t+1}^{b_{t+1}+c_{t+1}}\cdots x_n^{b_n+c_n}\in I_{(d-a_1; a_2,\ldots, a_n)}.
 \]
Therefore $x_2^{b_2+c_2}\cdots x_n^{b_n+c_n}=w(x_2^{a_2}\cdots x_t^{a_t}x_{t+1}^{b_{t+1}+c_{t+1}}\cdots x_n^{b_n+c_n})\in I_{(d-a_1; a_2,\ldots, a_n)}$, as desired.
\end{proof}

\section{Componentwise polymatroidal ideals}
\label{extension}

In this section we extend the notion of polymatroidal ideals to monomial ideals which are not necessarily generated  in a single degree.

Let $I$ be a monomial ideal. We denote by $I_{\langle j\rangle}$ the monomial ideal generated by all monomial of degree $j$ in $I$. The ideal $I$ is called {\em componentwise linear}, if  $I_{\langle j\rangle}$ has a linear resolution for all $j$. Basic properties about componentwise linear ideals can be found in \cite{HH}.

\begin{Definition}
{\em Let $I$ be a monomial ideal. We say that $I$ is {\em componentwise polymatroidal}, if $I_{\langle j\rangle}$ is polymatroidal for all $j$.}
\end{Definition}

Observe that if  $d$ is the highest degree of a generator of $I$, then $I$ is componentwise polymatroidal if and only if $I_{\langle j\rangle}$ is polymatroidal for all $j\leq d$. Indeed, $I_{\langle j\rangle}=I_{\langle d\rangle}\mm^{j-d}$ for $j\geq d$. Moreover, all powers of $\mm$ are polymatroidal and products of polymatroidal ideals are again polymatroidal, see \cite[Theorem 5.3]{CH}

\medskip
 It is easy to see that  $I$ is componentwise polymatroidal if and only if $I\: u$ is componentwise polymatroidal for all monomials $u$. However if we only assume that  $I\: u$  is componentwise linear for all monomials $u$, it does not necessarily  follow that $I$ is componentwise polymatroidal. Indeed, let  $I=(x_1x_2,x_1x_3^2,x_2x_3^2).$
Then $I\:u $ is componentwise linear for all monomials $u$, but $I$ is not componentwise polymatroidal.

\medskip

It is natural to ask whether powers of componentwise  polymatroidal ideals are again componentwise  polymatroidal. There is a positive answer to this question in the following case.

\begin{Proposition}
\label{veryspecial}
Let $I$ be a componentwise polymatroidal ideal generated in at most 2 degrees. Then $I^k$ is componentwise polymatroidal for all $k$.
\end{Proposition}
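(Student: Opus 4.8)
Write $I = I_{\langle d\rangle} + I_{\langle e\rangle}$ with $d < e$, where $L = I_{\langle d\rangle}$ and $M = I_{\langle e\rangle}$ are polymatroidal. Since $I$ is generated in the two degrees $d$ and $e$, a generating set of $I^k$ consists of products $u_1\cdots u_k$ with each $u_i \in G(L) \cup G(M)$; grouping by how many factors come from $M$, we get
\[
I^k = \sum_{j=0}^k L^{k-j}M^j .
\]
The key observation is that $L^{k-j}M^j$ is a product of polymatroidal ideals, hence polymatroidal (by \cite[Theorem 5.3]{CH}), and it is generated in the single degree $(k-j)d + je$. Since $d < e$, these degrees are strictly increasing in $j$: the degrees $kd, (k-1)d+e, \ldots, ke$ are $k+1$ distinct values, say $d_0 < d_1 < \cdots < d_k$ with $d_j = (k-j)d + je$. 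So $I^k$ is a sum of polymatroidal ideals generated in pairwise distinct degrees, one in each degree $d_j$.

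The plan is then to show that for such an ideal each graded component $I^k_{\langle \ell\rangle}$ is polymatroidal. For $\ell$ strictly between two consecutive $d_j$'s, or below $d_0$, or above $d_k$, the component is of the form (polymatroidal ideal)$\cdot \mm^{\text{something}}$ or a sum of such, which is again polymatroidal — but the genuinely interesting point is that the ``jumps'' $d_j$ are spaced far enough apart: I would verify that $d_{j} - d_{j-1} = e - d \geq 1$, and that multiplying $L^{k-j}M^j$ by $\mm^{e-d}$ lands inside $L^{k-j-1}M^{j+1}$, because $M \supseteq L\cdot\mm^{e-d}$ is false in general, so the correct containment to check is $L\,\mm^{e-d} \subseteq$ nothing useful. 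Let me instead argue componentwise directly: fix $\ell$, and let $j_0$ be the largest index with $d_{j_0} \leq \ell$. Then I claim
\[
I^k_{\langle \ell\rangle} = \sum_{j : d_j \leq \ell} \big(L^{k-j}M^j\big)\mm^{\ell - d_j}
= \big(L^{k-j_0}M^{j_0}\big)\mm^{\ell - d_{j_0}} + \big(L^{k-j_0+1}M^{j_0-1}\big)\mm^{\ell - d_{j_0-1}} + \cdots,
\]
and the first summand already contains all the others once $\ell \geq d_{j_0-1} + d$...

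\textbf{The main obstacle.} The delicate step is precisely showing that a \emph{sum} of polymatroidal ideals of the above special shape is polymatroidal — sums of polymatroidal ideals are not polymatroidal in general. Here one uses that the summands are totally ordered by inclusion on each graded strand, or more precisely that for $j < j'$ with both $d_j, d_{j'} \leq \ell$ one has $\big(L^{k-j'}M^{j'}\big)\mm^{\ell-d_{j'}} \subseteq \big(L^{k-j}M^{j}\big)\mm^{\ell - d_j}$; this would follow from $M \subseteq L\,\mm^{e-d}$, i.e. every degree-$e$ generator of $I$ is divisible by... no, rather from the fact that $M\mm^0 \subseteq L\mm^{e-d}$ need not hold, so instead one must exploit that $I$ itself being a fixed ideal forces $M \subseteq L \cdot \mm^{e-d}$ — indeed every $v \in G(M)$ lies in $I_{\langle e\rangle}$, and since $I$ is an ideal containing $L$ in degree $d$... this is still not automatic. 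The honest resolution is: either (i) $I_{\langle e\rangle} = I_{\langle d\rangle}\mm^{e-d}$, in which case $I = I_{\langle d\rangle}(1 + \mm^{e-d}) = I_{\langle d\rangle}$ effectively and $I^k = L^k$ is polymatroidal with all components polymatroidal; or (ii) $I_{\langle d\rangle}\mm^{e-d} \subsetneq I_{\langle e\rangle}$, and then one shows $L^{k-j}M^j \mm^{e-d} \subseteq L^{k-j-1}M^{j+1}$ because $L\mm^{e-d} \subseteq I_{\langle e\rangle} = M$ — and \emph{this} containment is the true engine, valid since $L\mm^{e-d} = I_{\langle d\rangle}\mm^{e-d} \subseteq I_{\langle e\rangle} = M$ always holds as $I$ is an ideal. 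With this in hand, $\big(L^{k-j}M^j\big)\mm^{\ell-d_j} \subseteq L^{k-j-1}M^{j+1}\mm^{\ell - d_j - (e-d)} = L^{k-j-1}M^{j+1}\mm^{\ell - d_{j+1}}$, so in the sum for $I^k_{\langle\ell\rangle}$ every summand is contained in the next one with larger $j$, and hence $I^k_{\langle\ell\rangle} = \big(L^{k-j_0}M^{j_0}\big)\mm^{\ell - d_{j_0}}$, a single polymatroidal ideal. I would need to double-check the edge case $\ell > d_k = ke$, where $I^k_{\langle\ell\rangle} = M^k\mm^{\ell - ke}$ is plainly polymatroidal, and $\ell < d_0 = kd$, where the component is zero. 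That completes the argument; the one place requiring genuine care is the containment $L\mm^{e-d}\subseteq M$ and propagating it to the products $L^{k-j}M^j$, which is why the restriction to \emph{two} degrees is essential — with three or more degrees the intermediate products need not chain up this way.
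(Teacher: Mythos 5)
Your argument is correct and is essentially the paper's own proof: the same expansion $I^k=\sum_{j}L^{k-j}M^{j}$, the same key containment $L\,\mm^{e-d}\subseteq M$ coming from $I$ being an ideal, and the same telescoping that collapses $I^k_{\langle\ell\rangle}$ to the single product of polymatroidal ideals $L^{k-j_0}M^{j_0}\mm^{\ell-d_{j_0}}$. The exploratory detours (trying the reversed inclusion, the case split (i)/(ii)) are unnecessary once you reach the true engine, but they do not affect correctness.
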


\begin{proof} The statement is trivial if $I$ is generated in a single degree. So now assume that $I$ is generated in 2 degrees, say, in degree $d$ and $d+t$ with $t>0$. Then $I=I_{\langle d\rangle} +I_{\langle d+t\rangle}$. Hence
\begin{eqnarray}
\label{idealsum}
I^k=  \sum_{j=0}^k(I_{\langle d\rangle})^{k-j}(I_{\langle d+t\rangle})^{j}.
\end{eqnarray}
Since $I^k$ is generated in degree $\geq dk$ it remains to be shown that $(I^k)_{\langle kd+r\rangle}$ is polymatroidal for all $r\geq 0$.  It follows from
(\ref{idealsum}) that
\[
(I^k)_{\langle kd+r\rangle}=  \sum_{j=0}^\ell (I_{\langle d\rangle})^{k-j}(I_{\langle d+t\rangle})^{j}\mm^{r-tj},
\]
where  $\ell =\min\{k,\lfloor r/t\rfloor\}$.

Observe that for $j< \ell$ we have
\begin{eqnarray*}
(I_{\langle d\rangle})^{k-j}(I_{\langle d+t\rangle})^{j}\mm^{r-tj}&=&(I_{\langle d\rangle})^{k-j-1}(I_{\langle d+t\rangle})^{j}I_{\langle d\rangle}\mm^t\mm^{r-t(j+1)}\\
&\subseteq &(I_{\langle d\rangle})^{k-(j+1)}(I_{\langle d+t\rangle})^{j+1}\mm^{r-t(j+1)}.
\end{eqnarray*}
It follows that $(I^k)_{\langle kd+r\rangle}=(I_{\langle d\rangle})^{k-\ell}(I_{\langle d+t\rangle})^{\ell}\mm^{r-t\ell}$. Since products of polymatroidal ideals are polymatroidal the desired conclusion  follows.
\end{proof}

In general powers of componentwise polymatroidal ideals are not componentwise polymatroidal.

\begin{Example}
\label{polymatroidal}
{\em Let $I=(x_1^2, x_2^2x_3,x_1x_2x_3, x_1x_2^2, x_1x_3^3, x_2x_3^3)$. By using Proposition~\ref{dimthree} it is easy to see that  $I$ is componentwise polymatroidal. However $(I^2)_{\langle 6\rangle}$ is not polymatroidal, because  $(I^2)_{\langle 6\rangle}(P_{\{3\}})=(x_1x_2^3,x_2^4,x_1^2x_2,x_1^3)$ is not generated in a single  degree.}
\end{Example}

One would expect that componentwise polymatroidal ideals can also be  characterized by an exchange property of its minimal set of monomial generators. Suppose for a monomial ideal $I$  we require that  for all monomials $u,v\in G(I)$ the following condition holds: $(*)$ if $\deg_{x_i}(u)>\deg_{x_i}(v)$ for some $i$, then there exists an integer $j$ such that $\deg_{x_j}(v)>\deg_{x_j}(u)$ and $x_j(u/x_i)\in I$. Then it is easily checked that $I$ is necessarily generated in a single degree and hence polymatroidal.

Therefore  we give the following

\begin{Definition}
{\em Let $I$ be a monomial ideal. We say that $I$ satisfies the {\em non-pure exchange property},  if for all $u,v\in G(I)$ with $\deg(u)\leq\deg(v)$ and for all $i$ such that $\deg_{x_i}(v)>\deg_{x_i}(u)$, there exists $j$ such that
 $\deg_{x_j}(v)<\deg_{x_j}(u)$ and $x_j(v/x_i)\in I$.}
\end{Definition}

\begin{Proposition}
\label{exchange}
If $I$ is componentwise polymatroidal, then $I$ has the non-pure exchange property.
\end{Proposition}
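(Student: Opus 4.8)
The plan is to argue by contradiction and to reduce everything to the ordinary polymatroidal exchange property of the single component $I_{\langle q\rangle}$, where $q=\deg(v)$. We may assume $n\geq 2$, since for $n=1$ every monomial ideal is principal and there is nothing to prove. Suppose there were $u,v\in G(I)$ with $p:=\deg(u)\leq q:=\deg(v)$ and an index $i$ with $\deg_{x_i}(v)>\deg_{x_i}(u)$ (so in particular $\deg_{x_i}(v)\geq 1$), but such that $x_j(v/x_i)\notin I$ for every $j$ with $\deg_{x_j}(v)<\deg_{x_j}(u)$. The two facts we need about $I_{\langle q\rangle}$ are: first, $I_{\langle q\rangle}$ is generated in the single degree $q$, so that every degree-$q$ monomial of $I$ — in particular $v$, and in particular $u\gamma$ for any monomial $\gamma$ of degree $q-p$ — already lies in $G(I_{\langle q\rangle})$ (there is no proper divisibility among monomials of the same degree); second, $I_{\langle q\rangle}$ is polymatroidal by hypothesis.

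Now fix an index $k\in[n]\setminus\{i\}$ (possible since $n\geq 2$) and set $\gamma=x_k^{\,q-p}$. Since $u\in I$, the monomial $u\gamma$ has degree $q$ and lies in $I_{\langle q\rangle}$; choose $w\in G(I_{\langle q\rangle})$ with $w\mid u\gamma$, so that $\deg_{x_\ell}(w)\leq\deg_{x_\ell}(u\gamma)$ for all $\ell$. In particular $\deg_{x_i}(w)\leq\deg_{x_i}(u)<\deg_{x_i}(v)$, and both $v$ and $w$ belong to $G(I_{\langle q\rangle})$, so the polymatroidal exchange property applied to the pair $v,w$ at the variable $x_i$ yields an index $j$ (necessarily $\neq i$) with $\deg_{x_j}(v)<\deg_{x_j}(w)$ and $x_j(v/x_i)\in I_{\langle q\rangle}\subseteq I$. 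By our standing hypothesis the membership $x_j(v/x_i)\in I$ forces $\deg_{x_j}(v)\geq\deg_{x_j}(u)$. Chaining the inequalities, $\deg_{x_j}(u)\leq\deg_{x_j}(v)<\deg_{x_j}(w)\leq\deg_{x_j}(u\gamma)=\deg_{x_j}(u)+\deg_{x_j}(\gamma)$, which forces $\deg_{x_j}(\gamma)\geq 1$; hence $q>p$ and $j=k$. (When $p=q$ the chain already collapses to $\deg_{x_j}(u)<\deg_{x_j}(u)$, so the argument is uniform.) We conclude $\deg_{x_k}(v)\geq\deg_{x_k}(u)$.

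Since $k$ was an arbitrary index different from $i$, this shows $\deg_{x_k}(v)\geq\deg_{x_k}(u)$ for all $k\neq i$; together with $\deg_{x_i}(v)>\deg_{x_i}(u)$ we obtain $u\mid v$. As $u,v\in G(I)$, this forces $u=v$, contradicting $\deg_{x_i}(v)>\deg_{x_i}(u)$. I do not anticipate a real obstacle here: the only things to check carefully are that $v\in G(I_{\langle q\rangle})$ and that $w$ divides $u\gamma$ coordinatewise, both immediate from the definition of $I_{\langle j\rangle}$. The one idea that makes the proof go through is to ``lift'' $u$ to degree $q$ by multiplying with a \emph{pure power of a single variable} $x_k\neq x_i$ and then to let $k$ range over all indices $\neq i$; the polymatroidal exchange property of $I_{\langle q\rangle}$ then does the rest.
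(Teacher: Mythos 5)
Your proof is correct, and it rests on the same key idea as the paper's: lift $u$ to degree $q=\deg(v)$ by multiplying with a pure power of a single variable different from $x_i$, note that the result is a minimal generator of the polymatroidal ideal $I_{\langle q\rangle}$, and apply the symmetric exchange property there. The difference is in how the lifting variable is chosen. The paper observes up front that $u\nmid v$ (since $u\neq v$ are minimal generators), so some $l\neq i$ has $\deg_{x_l}(v)<\deg_{x_l}(u)$, and it lifts along that particular $l$; the index $j$ returned by exchange then works immediately in both the cases $j=l$ and $j\neq l$, yielding a short direct proof. You instead run a proof by contradiction, lifting along each $k\neq i$, showing the returned index must be $k$ itself, and concluding $\deg_{x_k}(v)\geq\deg_{x_k}(u)$ for every $k\neq i$; combined with $\deg_{x_i}(v)>\deg_{x_i}(u)$ this gives $u\mid v$, contradicting minimality. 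In effect you re-derive the paper's observation ``there exists $l\neq i$ with $\deg_{x_l}(v)<\deg_{x_l}(u)$'' at the end rather than using it at the start. Both are sound; the paper's choice of $l$ avoids the contradiction scaffolding and the quantification over $k$. One cosmetic remark: since $u\gamma$ already has degree $q$, the divisor $w\in G(I_{\langle q\rangle})$ you pick is necessarily $u\gamma$ itself, so the intermediate ``choose $w\mid u\gamma$'' step can be dropped.
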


\begin{proof}
Let $u,v\in G(I)$ with $\deg(u)\leq \deg(v)=t$ and  $\deg_{x_i}(v)>\deg_{x_i}(u)$ for some $i$. We may assume that  $\deg(u)< \deg(v)$, since $I_{\langle t\rangle}$ is polymatroidal.  By using the fact that  $u$ does not divide $v$, it follows that
there exists $l\neq i$ such that
\begin{eqnarray}
\label{star}
\deg_{x_l}(v)<\deg_{x_l}(u).
\end{eqnarray}
Since  $\deg(u)<\deg(v)$, there exists integer $a$ such that
$\deg(ux_l^a)=\deg(v)$. Then there exists $j$ such that
\begin{eqnarray}
\label{starstar}
\deg_{x_j}(v)<\deg_{x_j}(ux_l^a),
\end{eqnarray}
since $I_{\langle t\rangle}$ is polymatroidal and since   $\deg_{x_i}(v)>\deg_{x_i}(u)=\deg_{x_i}(ux_l^a)$. Moreover,   $x_j(v/x_i)\in I$.
If $j=l$, then by (\ref{star}),  $\deg_{x_j}(v)<\deg_{x_j}(u)$ and $x_j(v/x_i)\in I$. If $j\neq l$,  then (\ref{starstar}) implies
that  $\deg_{x_j}(v)<\deg_{x_j}(ux_l^a)=\deg_{x_j}(u)$ and $x_j(v/x_i)\in I$.
\end{proof}

Unfortunately, the converse of Proposition \ref{exchange} is not true. Indeed, let $I=(x_1x_2,x_1x_3^2,x_2x_3^2).$
Then $I$ has the non-pure exchange property but $I_{\langle 3\rangle}$ is not polymatroidal. On the other hand,  $I$ has linear quotients. Thus the question arises whether any monomial ideal satisfying the  non-pure exchange property has linear quotients. In view of Proposition~\ref{exchange} a positive answer to this question would imply that any componentwise polymatroidal ideal has linear quotients. In the following we show that ideals which are componentwise of Veronese type have linear quotients.

The following concept is needed for the next results: Let $I\subset J$ be monomial ideals with $G(I)\subset G(J)$. We say that $I$ can be {\em extended by linear quotients} to $J$, if the set $G(J)\setminus G(I)$ can be ordered $v_1,\ldots,v_m$  such that $(G(I),v_1, \ldots,v_i)\: v_{i+1}$ is generated by variables for $i=1,\ldots,m-1$. In a particular a monomial ideal $L$ has linear quotients, $(0)$ can be extended to $L$ by linear quotients.

It is known (\cite[Corollary 2.8]{SZ}) that an ideal with linear quotients is componentwise linear. In particular, if $I$ has linear quotients and $I$ can be extended to $J$ by linear quotients, then $J$ has linear quotients and hence a linear resolution.

\begin{Theorem}
\label{extend}
Let $I$ be an ideal of Veronese type generated in degree $d$, and  $J$ an ideal of Veronese type generated in degree $d+1$ such that $I\mm\subseteq J$.  Then $I\mm$ can be extended by linear quotients to $J$.
\end{Theorem}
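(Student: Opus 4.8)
The plan is to describe $I\mm$ and $J$ concretely, reduce to the case where the bound vector of $J$ exceeds that of $I$ by exactly $1$ in each slot, and then settle that case by writing down an explicit order on the generators.

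\emph{Setup.} Write $I=I_{(d;a_1,\ldots,a_n)}$ and $J=I_{(d+1;b_1,\ldots,b_n)}$; replacing $a_i$ by $\min(a_i,d)$ and $b_i$ by $\min(b_i,d+1)$ leaves both ideals unchanged, so I assume this done. If $I=0$ the statement just says $J$ has linear quotients, which is true since $J$ is polymatroidal; so assume $I\neq0$. For a monomial $w$ set $\operatorname{exc}(w)=\sum_i\max(0,\deg_{x_i}(w)-a_i)$. A direct check gives $w\in G(I\mm)$ iff $\deg(w)=d+1$ and $\operatorname{exc}(w)\le1$; and since $I\neq0$, the inclusion $I\mm\subseteq J$ forces $b_i\ge a_i+1$ for all $i$, whence $G(I\mm)=\{w\in G(J):\operatorname{exc}(w)\le1\}$ and $G(J)\setminus G(I\mm)=\{w\in G(J):\operatorname{exc}(w)\ge 2\}$.

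\emph{Reduction by peeling.} Suppose $b_{i_0}\ge a_{i_0}+2$ and put $J'=I_{(d+1;b_1,\ldots,b_{i_0}-1,\ldots,b_n)}$; then $I\mm\subseteq J'\subseteq J$, and I claim $J'$ can be extended to $J$ by linear quotients. Indeed $G(J)\setminus G(J')=\{x_{i_0}^{b_{i_0}}w'':w''\in G(K)\}$ where $K=I_{(d+1-b_{i_0};\,(b_j)_{j\neq i_0})}$ is a Veronese type, hence polymatroidal, ideal in the remaining variables; choosing a linear-quotient order $w''_1,\ldots,w''_p$ of $G(K)$, order $G(J)\setminus G(J')$ as $x_{i_0}^{b_{i_0}}w''_1,\ldots,x_{i_0}^{b_{i_0}}w''_p$. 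Since the factor $x_{i_0}^{b_{i_0}}$ cancels in the relevant greatest common divisors, $\sum_{l'<l}\bigl(x_{i_0}^{b_{i_0}}w''_{l'}:x_{i_0}^{b_{i_0}}w''_l\bigr)=(w''_1,\ldots,w''_{l-1}):w''_l$ is generated by variables; and writing $J'=\sum_{e=0}^{b_{i_0}-1}x_{i_0}^{e}I_{(d+1-e;\,(b_j)_{j\neq i_0})}S$ one computes $J':x_{i_0}^{b_{i_0}}w''_l=\sum_{e=0}^{b_{i_0}-1}\bigl(I_{(d+1-e;(b_j))}:w''_l\bigr)$, which by the elementary facts (a)~$I_{(p;c)}\subseteq I_{(q;c)}$ for $p\ge q$ and (b)~$I_{(m+1;c)}:w=(\{x_j:\deg_{x_j}(w)<c_j\})$ when $\deg(w)=m$ and $\deg_{x_j}(w)\le c_j$ for all $j$, collapses to its $e=b_{i_0}-1$ summand and so is generated by variables. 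Thus $\bigl(J',x_{i_0}^{b_{i_0}}w''_1,\ldots,x_{i_0}^{b_{i_0}}w''_{l-1}\bigr):x_{i_0}^{b_{i_0}}w''_l$ is generated by variables, proving the claim. Since extension by linear quotients is transitive (everything in sight is generated in degree $d+1$), iterating this for every coordinate with $b_{i_0}\ge a_{i_0}+2$ reduces the theorem to the case $b_i=a_i+1$ for all $i$.

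\emph{The case $b_i=a_i+1$.} Here each $w\in G(J)$ satisfies $\operatorname{exc}(w)=|\operatorname{sat}(w)|$, where $\operatorname{sat}(w)=\{i:\deg_{x_i}(w)=a_i+1\}$. Order $G(J)\setminus G(I\mm)$ primarily by increasing $r:=\operatorname{exc}(w)$ $(\ge2)$, secondarily by increasing lexicographic order of the sorted tuple $\operatorname{sat}(w)$, with remaining ties broken arbitrarily, and place all of $G(I\mm)$ first. To verify the linear quotient condition, fix $v\in G(J)\setminus G(I\mm)$ with $r=\operatorname{exc}(v)$, let $L$ be the ideal generated by the generators preceding $v$, and take any $u\in G(L)$; put $P=\{t:\deg_{x_t}(u)>\deg_{x_t}(v)\}\neq\emptyset$. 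It suffices to find $t\in P$ with $x_tv\in L$. If some $t\in P$ has $\deg_{x_t}(v)<a_t$, choose $s\in\operatorname{sat}(v)$ (nonempty since $r\ge2$); then $vx_t/x_s\in G(J)$ has $\operatorname{exc}=r-1$, hence lies in $L$, and $x_tv=(vx_t/x_s)x_s\in L$. Otherwise $\deg_{x_t}(v)=a_t$ for all $t\in P$, so, coordinates being $\le a_t+1$, $\deg_{x_t}(u)=a_t+1$ and $t\in\operatorname{sat}(u)\setminus\operatorname{sat}(v)$ for every $t\in P$; a degree count then forces $\operatorname{exc}(u)=r$ and $\operatorname{sat}(u)=(\operatorname{sat}(v)\setminus N')\cup P$ for some $N'\subseteq\operatorname{sat}(v)$ with $|N'|=|P|$. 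Since $u$ precedes $v$ and $\operatorname{sat}(u)\neq\operatorname{sat}(v)$, we get $\operatorname{sat}(u)<_{\mathrm{lex}}\operatorname{sat}(v)$, which, given the form of $\operatorname{sat}(u)$, is possible only if $\min P<\max\operatorname{sat}(v)$. Then with $t=\min P$ and $s=\max\operatorname{sat}(v)$ the monomial $vx_t/x_s\in G(J)$ has $\operatorname{exc}=r$ and $\operatorname{sat}(vx_t/x_s)<_{\mathrm{lex}}\operatorname{sat}(v)$, so it precedes $v$, lies in $L$, and again $x_tv\in L$. This gives the linear quotient condition and finishes the proof.

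\emph{Where the difficulty lies.} The crux is the ``otherwise'' case above: one must steer the polymatroid exchange $v\mapsto vx_t/x_s$ onto a generator occurring earlier in the chosen order, and it is precisely the bound $b_i=a_i+1$ — each coordinate excess being at most $1$ — that makes the lexicographic order on the sets $\operatorname{sat}(w)$ accomplish this. Without the peeling reduction of the second step one would have to control exchanges that alter an excess by more than one, and no single obvious linear order on $G(J)$ appears to work; by contrast the colon computation inside the peeling step is routine modulo facts (a) and (b).
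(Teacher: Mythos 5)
Your proof is correct, and at its core it follows the same two-step route as the paper: first reduce to the ``tight'' case where the bound vector of $J$ is $(a_1+1,\ldots,a_n+1)$, then settle that case by ordering the new generators first by the size of the saturated set $\operatorname{sat}(w)=\{i:\deg_{x_i}(w)=a_i+1\}$ (the paper's $S_w$) and then positionally; your comparison of sorted tuples of $\operatorname{sat}(w)$ is exactly the same as the paper's lex comparison of $\bar w=\prod_{i\in S_w}x_i^{a_i+1}$. Where you diverge is in the reduction step and in the final tiebreak. The paper proves the tight case first and then climbs from $L=I_{(d+1;a_1+1,\ldots,a_n+1)}$ up to $J$ one bound-increment at a time, ordering each new slab by plain lex on the monomials and producing exchange partners directly; you instead peel $J$ down to the tight case one bound-decrement at a time, handling each slab $\{x_{i_0}^{b_{i_0}}w'':w''\in G(K)\}$ by borrowing a linear-quotient order from the lower-dimensional Veronese ideal $K$ and computing the colon $J':x_{i_0}^{b_{i_0}}w''_l$ via the decomposition $J'=\sum_e x_{i_0}^e\,I_{(d+1-e;(b_j)_{j\neq i_0})}S$ together with your collapse lemma (facts (a) and (b)). This is a clean, self-contained variant, arguably a bit more structural than the paper's direct exchange construction, though it requires verifying the colon identity rather than just exhibiting a single partner $w$. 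In the base case you also dispense with the paper's third-level lexicographic tiebreak on the monomials themselves: the paper needs it to handle Case (d), while you observe that when $\operatorname{sat}(u)=\operatorname{sat}(v)$ the ``otherwise'' branch is vacuous, so an arbitrary tiebreak suffices --- a modest but genuine simplification. All the claims you leave as ``elementary facts'' or ``a degree count'' check out.
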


\begin{proof}
Let $I=I_{(d;a_1,\ldots,a_n)}$. In the first step of the proof we assume that $J=I_{(d+1;a_1+1,\ldots,a_n+1)}$.
Let $u=x_1^{h_1} \cdots x_n^{h_n} \in G(J)$.
We define the set $$S_u=\{i\in [n]\;|\;h_i=a_i+1\}$$
and  the monomial $\bar{u}=\prod_{i\in S_u}x_i^{h_i}$.

\medskip
Now we consider the following order for elements of $G(J)\setminus G(I\mm)$:  we say that $u>v$, if either $|S_u|<|S_v|$,  or  $|S_u|=|S_v|$ and $\bar{u}>_{\lex}\bar{v}$, or  $|S_u|=|S_v|$, $\bar{u}=\bar{v}$ and $u>_{\lex}v$. We also set $u>v$ for all $v\in G(J)\setminus G(I\mm)$
and all $u\in G(I\mm)$.

We claim that with this order,  $I\mm$ can be extended to $J$ by linear quotients.
We have  to show that for all $u=x_1^{h_1}\cdots x_n^{h_n}\in G(J)$ and all $v=x_1^{t_1}\cdots x_n^{t_n}\in G(J)\setminus G(I\mm)$  with $u>v$  there exists $w\in G(J)$ with $w>v$  such that $(w)\:v=(x_j)$ and $x_j$ divides $u/\gcd(u,v)$. We distinguish several cases.

Case (a):  $u\in G(I\mm)$ and $v\in G(J)\setminus G(I\mm)$.  Since $u\in G(I\mm)$,  there exists $r\in [n]$ such that  $h_j\leq a_j$  for $j\neq r$. On the other hand,  since $v\in G(J)\setminus G(I\mm)$,  there exists $l\in[n]$ such that $t_l=a_l+1$. If there exists $p\neq r$ such that $x_p$ divides $u/\gcd(u,v)$, then $t_p<h_p\leq a_p$. Let $w=(v/x_l)x_p$; then $(w)\:v=(x_p)$ and $w\in G(J)$ with $w>v$, because $|S_w|<|S_v|$.  Next we consider the case that $x_p$ does not divide $u/\gcd(u,v)$ for all  $p\neq r$.  Then $(u)\:v=(x_r^c)$ for some integer $c$. If the $c=1$, then there is nothing to prove.  Otherwise,    $t_r+1<h_r\leq a_r+1$. Let  $w=(v/x_l)x_r$; then  $(w)\:v=(x_r)$ and $w\in G(J)$ with $w>v$, because $|S_w|<|S_v|$.

Case (b):  $u,v\in G(J)\setminus G(I\mm)$ and $|S_u|<|S_v|$. Since  $v\in G(J)\setminus G(I\mm)$, it follows that there exists $l\in[n]$ such that $t_l=a_l+1$. If there exists $r\in S_u$ with
$t_r<a_r$, we set  $w=(v/x_l)x_r$. Then $(w)\: v=(x_r)$  and $w\in G(J)$ with $w>v$, because $|S_w|<|S_v|$.
Next we consider the case that $t_r\geq a_r$ for all $r\in S_u$. Since $\deg(u)=\deg(v)$ and $|S_u|<|S_v|$, it follows that there exists $s\in [n]\setminus S_u$ such that $h_s>t_s$. We set $w=(v/x_l)x_s$. Then again $(w)\: v=(x_s)$,  and $w\in G(J)$ with $w>v$, because $|S_w|<|S_v|$.

Case (c):  $u,v\in G(J)\setminus G(I\mm)$, $|S_u|=|S_v|$ and $\bar{u}>_{\lex}\bar{v}$. There exist $l,r$ such that $r<l$, $h_r=a_r+1>t_r$ and $h_l<t_l=a_l+1$.
Let $w=(v/x_l)x_r$; then $(w)\: v=(x_r)$ and $w\in G(J)$ with $w>v$. Indeed, if  $|S_w|<|S_v|$ then $w>v$, and if  $|S_w|=|S_v|$, then $\bar{w}>_{\lex}\bar{v}$ and again  $w>v$.

Case (d):  $u,v\in G(J)\setminus G(I\mm)$, $|S_u|=|S_v|$ and $\bar{u}=\bar{v}$ and $u>_{\lex}v$. There exist $l,r$ such that $r<l$, $t_r<h_r\leq a_r$ and $h_l<t_l\leq a_l$. We set  $w=(v/x_l)x_r$.  Then   $(w)\: v=(x_r)$ and $w\in G(J)$ with $w>v$, because   $|S_w|=|S_v|$, $\bar{w}=\bar{v}$ and $w>_{\lex}v$.

\medskip
In the next step we consider the general case where  $J=I_{(d;b_1,\ldots,b_n)}$ and $I\mm\subseteq J$. By the first step we can extend $I\mm$ to $L=I_{(d+1;a_1+1,\ldots,a_n+1)}$ by linear quotients. Since $I\mm\subseteq J$ it follows that  $a_i+1\leq b_i$ for  $i=1,\ldots,n$. Therefore, $L\subseteq J$, and hence it suffices to extend $L$ to $J$ by linear quotients.

Set $c_i=a_i+1$ for $i=1,\ldots,n$. It is enough to show that $L=I_{(d+1;c_1,\ldots,c_n)}$ can be extended to $K=I_{(d+1;c_1,\ldots, c_{s-1},c_s+1,c_{s+1},\ldots c_n)}$ for some $s\in[n]$. For monomials $u,v\in G(K)$, we say $u>v$, if $u\in G(L)$ and $v\in G(K)\setminus G(L)$ or $u,v\in G(K)\setminus G(L)$ and $u>_{\lex}v$.

We claim that with this order,  $L$ can be extended to $K$ by linear quotients.
We have  to show that for all $u=x_1^{h_1}\cdots x_n^{h_n}\in G(K)$ and all $v=x_1^{t_1}\cdots x_n^{t_n}\in G(K)\setminus G(L)$  with $u>v$  there exists $w\in G(K)$ with $w>v$  such that $(w)\:v=(x_j)$ and $x_j$ divides $u/\gcd(u,v)$.
 We distinguish two cases.

 (i)  $u\in G(L)$ and $v\in G(K)\setminus G(L)$. Since $v\in G(K)\setminus G(L)$, it follows that $t_s=c_s+1$, so $t_s>h_s$. On the other hand since $\deg(u)=\deg(v)$,  there exists $r\in[n]$ such that $h_r>t_r$. Let $w=(v/x_s)x_r$, then $(w)\:v=(x_r)$ and $w>v$ because $w\in G(L)$.

(ii)  $u,v\in G(K)\setminus G(L)$ and $u>_{\lex}v$. So there exist $l,r$ such that $r<l$, $t_r<h_r$ and $h_l<t_l$.
Let $w=(v/x_l)x_r$, then $(w)\:v=(x_r)$ and $w\in G(K)$ with $w>v$, because $w>_{\lex}v$.
\end{proof}

A monomial ideal $I$ is called {\em componentwise of  Veronese type}, if $I_{\langle j\rangle}$ is of Veronese type for all $j$.

\begin{Corollary}
\label{componentwiseveronese}
 Let $I$ be an ideal which is componentwise of Veronese type. Then $I$ has linear quotients.
\end{Corollary}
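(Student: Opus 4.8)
The plan is to build up $I$ degree by degree, using Theorem~\ref{extend} as the single engine. Write $d$ for the lowest degree of a generator of $I$ and $e$ for the highest. Since $I$ is componentwise of Veronese type, each $I_{\langle j\rangle}$ is of Veronese type, hence polymatroidal, hence has linear quotients; in particular $I_{\langle d\rangle}$ has linear quotients, so $(0)$ can be extended to $I_{\langle d\rangle}$ by linear quotients. The strategy is then to show that for each $j$ with $d\le j<e$, the ideal $I_{\langle d\rangle}+I_{\langle d+1\rangle}+\cdots+I_{\langle j\rangle}$ can be extended by linear quotients to $I_{\langle d\rangle}+\cdots+I_{\langle j+1\rangle}$. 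Concatenating all these extensions, starting from $(0)$, gives that $I=I_{\langle d\rangle}+\cdots+I_{\langle e\rangle}$ has linear quotients.

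First I would record the elementary fact that extension by linear quotients is transitive: if $I$ can be extended to $J$ by linear quotients and $J$ can be extended to $L$ by linear quotients, then $I$ can be extended to $L$ by linear quotients (just concatenate the two orderings of the new generators, the second block after the first). This, together with the remark in the text that an ideal with linear quotients together with an extension by linear quotients again has linear quotients, reduces everything to producing the single-degree-step extensions.

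For the key step, fix $j$ and set $A=I_{\langle d\rangle}+\cdots+I_{\langle j\rangle}$ and $A'=A+I_{\langle j+1\rangle}$. Let $B=I_{\langle j\rangle}$, an ideal of Veronese type generated in degree $j$, and let $C=I_{\langle j+1\rangle}$, of Veronese type generated in degree $j+1$. The monomials of $A'$ in degree $j+1$ are exactly $G(A')$ restricted to that degree together with $B\mm$; more precisely $B\mm\subseteq A'$, and $B\mm$ is itself generated in degree $j+1$. Theorem~\ref{extend} applies to $B$ and to the Veronese-type ideal $(B\mm)+C$ — note $(B\mm)+C$ is again of Veronese type since $B\mm=I_{(j+1;\,a_1+1,\dots,a_n+1)}$ when $B=I_{(j;a_1,\dots,a_n)}$, and the sum of this with $C=I_{(j+1;b_1,\dots,b_n)}$ is $I_{(j+1;\max(a_i+1,b_i))}$ — giving that $B\mm$ can be extended by linear quotients to $(B\mm)+C$. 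The remaining point is to promote this to an extension of all of $A$ to $A'$: one checks that when we throw in the lower-degree generators of $A$ (those in degrees $d,\dots,j-1$) they do no harm, because for a new generator $v$ of $A'$ of degree $j+1$ and any generator $w$ produced in degree $j+1$ by the argument of Theorem~\ref{extend}, the colon ideal $(\,G(A),\text{earlier new generators},w\,):v$ is still generated by a single variable — adding more generators to the ``left part'' can only shrink this colon ideal, and it already contains the variable coming from $w$, so it equals $(x_j)$ provided it does not become the whole ring, which it cannot since $v\notin A$ forces $v$ not to be divisible by any generator of $A$ and in particular the colon is proper until $v$ itself is adjoined. Thus $A$ extends to $A'$ by linear quotients.

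The main obstacle I anticipate is precisely this last bookkeeping: making sure that interleaving the genuinely new degree-$(j+1)$ generators $G(I_{\langle j+1\rangle})\setminus G(B\mm)$ with the ``filler'' generators $G(B\mm)\setminus G(A)$ in the right order keeps every colon ideal principal, and that the presence of the strictly-lower-degree generators of $A$ never creates a colon ideal equal to the unit ideal prematurely. I would handle this by using exactly the ordering from Theorem~\ref{extend} on the degree-$(j+1)$ generators, placing all elements of $G(A)$ (of every degree $\le j$) before all the new ones, and then arguing that each witness $w$ supplied by the proof of Theorem~\ref{extend} lies among generators already listed, so the colon is $(x_j)$; the lower-degree generators of $A$ only enlarge the ideal we colon by, hence only shrink the colon, which therefore stays principal (it cannot drop to $(0)$ or become everything, since $v$ is a minimal generator of $A'$ not in $A$). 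Once this is in place the transitivity argument closes the proof.
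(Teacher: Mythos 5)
Your overall strategy — use Theorem~\ref{extend} as the engine, then propagate from lowest degree upward — matches the paper in spirit, but the paper outsources the degree-by-degree induction entirely to \cite[Proposition 2.9]{SZ}, which says exactly that if $I_{\langle j\rangle}\mm$ extends to $I_{\langle j+1\rangle}$ by linear quotients for all $j$, then $I$ has linear quotients. You are instead attempting to reprove that proposition inline, and it is there that your sketch has real gaps.

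Two concrete problems. First, the assertion that $B\mm=I_{(j+1;\,a_1+1,\ldots,a_n+1)}$ when $B=I_{(j;a_1,\ldots,a_n)}$ is false in general: take $n=3$, $B=I_{(3;2,1,0)}=(x_1^2x_2)$, so $B\mm=(x_1^3x_2,x_1^2x_2^2,x_1^2x_2x_3)$, whereas $I_{(4;3,2,1)}$ also contains $x_1^3x_3$, which is not in $B\mm$. (This is harmless here only because $(B\mm)+C=C$ anyway, but the reason you give is not correct.) Second, and more seriously, your bookkeeping step rests on the claim that ``adding more generators to the left part can only shrink this colon ideal.'' The opposite is true: if $K\subseteq K'$ then $K:v\subseteq K':v$, so adding the lower-degree generators of $A$ can only \emph{enlarge} $(\cdot):v$, potentially introducing non-variable minimal generators. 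Your fallback — that the colon ``equals $(x_j)$ provided it does not become the whole ring'' — does not follow: a colon can strictly contain $(x_j)$ and fail to be generated by variables without being the unit ideal. What actually needs to be shown is that for every $u\in G(A)$ of degree $<j+1$ there is a variable $x_k$ dividing $u/\gcd(u,v)$ with $x_k\in L:v$; this is exactly the content of \cite[Proposition 2.9]{SZ}, and it is not automatic. (It can be proved by lifting $u$ to $um\in B\mm$ for a suitably chosen divisor $m$ of $v/\gcd(u,v)$ and checking that the witness variable from Theorem~\ref{extend} then lands in the support of $u/\gcd(u,v)$, but that argument is missing from your proposal.) Either cite the proposition, as the paper does, or supply this missing argument.
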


\begin{proof}
 It follows from Theorem \ref{extend}  that  $I_{\langle j\rangle}\mm$ can be extended to $I_{\langle j+1\rangle}$ by linear quotients for all $j$. Hence by \cite[Proposition 2.9]{SZ}  $I$ has linear quotients.
\end{proof}

\end{document}